\newtheorem{teo}{Theorem}[section]
\newtheorem{lem}[teo]{Lemma}
\newtheorem{prop}[teo]{Proposition}
\newtheorem{cor}[teo]{Corollary}
\newtheorem*{teo*}{Theorem}
\newtheorem{cnj}{Conjecture}
\newtheorem*{cnj*}{Conjecture}
\theoremstyle{definition}
\newtheorem{dfn}[teo]{Definition}
\newtheorem*{dfn*}{Definition}
\newtheorem{oss}[teo]{Remark}
\newtheorem*{dom*}{Question}
\newtheorem*{cnj:stok}{Stoker Conjecture}
\newtheorem*{cnj:wstok}{Weak Stoker Conjecture}
\newtheorem*{cnj:volconj}{The Maximum Volume Conjecture}
\newcommand{\ra}{\rightarrow}
\newcommand{\R}{\mathbb{R}}
\newcommand{\vol}{\textrm{Vol}}
\newcommand{\Pg}{\mathcal{P}_\Gamma}
\newcommand{\h}{\mathbb{H}}
\title{The volume conjecture for polyhedra implies the Stoker conjecture}
\author{Giulio Belletti}
\date{}
\newcommand{\address}{{
  \bigskip
  \footnotesize

  Giulio Belletti, \textsc{Universit\'e Paris-Saclay}\par\nopagebreak
  \textit{E-mail address},  \texttt{giulio.belletti@universite-paris-saclay.fr}

}}
\begin{document}

\maketitle

\begin{abstract}
 We show that the Volume Conjecture for polyhedra implies a weak version of the Stoker Conjecture; in turn we prove that this weak version of the Stoker conjecture implies the Stoker conjecture. The main tool used is an extension of a result of Montcouquiol and Weiss, saying that dihedral angles are local coordinates for compact polyhedra with angles $\leq \pi$.
\end{abstract}

\section{Introduction}

To what extent dihedral angles determine a hyperbolic polyhedron is a long standing question, first posed in \cite[Page 152]{stoker}. 

\begin{cnj*}[The Stoker conjecture]
Let $P_1$ and $P_2$ be two hyperbolic polyhedra with $1$-skeleton $\Gamma$ and the same dihedral angle at each corresponding edge. Then $P_1$ and $P_2$ are isometric.
\end{cnj*}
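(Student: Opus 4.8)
The plan is not to prove the Stoker conjecture outright — it is open — but to reduce it, in two steps, to the Volume Conjecture for polyhedra. I would introduce an intermediate statement, the \emph{Weak Stoker Conjecture}, asserting that two hyperbolic polyhedra with $1$-skeleton $\Gamma$ and the same dihedral angle at each edge have the same \emph{volume} (rather than being isometric), and then establish the chain: the Volume Conjecture for polyhedra implies the Weak Stoker Conjecture, which in turn implies the Stoker conjecture. The geometric heart is the second implication, and its engine is the Montcouquiol--Weiss result extended so that the dihedral-angle map is a local diffeomorphism of $\Pg$ \emph{without} the convexity hypothesis (angles $\leq\pi$).

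For the first implication, the Volume Conjecture predicts that the exponential growth rate of the relevant quantum invariant attached to a pair $(\Gamma,\theta)$, where $\theta$ is an admissible assignment of dihedral angles, equals the volume of a hyperbolic polyhedron with $1$-skeleton $\Gamma$ and dihedral angles $\theta$. Since the left-hand side depends only on $(\Gamma,\theta)$, this forces the volume to be a well-defined function $V=V(\Gamma,\theta)$, which is exactly the Weak Stoker Conjecture. (If one prefers a form of the conjecture referring only to the maximal volume over all geometric realizations, I would instead restrict the subsequent argument to a distinguished class of ``dominant'' polyhedra and propagate from there; the volume-per-shape formulation is the one that matches the statement most cleanly.)

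For the second implication, let $\Phi\colon\Pg\to\R^E$ be the dihedral-angle map on the space of (marked) compact hyperbolic polyhedra with $1$-skeleton $\Gamma$; by the extended Montcouquiol--Weiss result, $\Phi$ is everywhere a local diffeomorphism. Assuming Weak Stoker, the volume descends to a smooth function $V$ on $\Phi(\Pg)$ with $\vol=V\circ\Phi$; differentiating and comparing with the Schläfli formula
\[
d\,\vol=-\tfrac12\sum_{e\in E}\ell_e\,d\theta_e
\]
shows that each edge length $\ell_e=-2\,\partial V/\partial\theta_e$ is likewise a function of $\theta$ alone. Hence two polyhedra $P_1,P_2$ with $\Phi(P_1)=\Phi(P_2)=\theta$ share the same $1$-skeleton, the same dihedral angles, \emph{and} the same edge lengths; a reconstruction argument — assembling each face as a hyperbolic polygon from these data, using the extended local-rigidity statement to pin down the vertex links (immediate for trivalent vertices, more delicate in general) — then yields that $P_1$ and $P_2$ are isometric. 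Equivalently, one can run a path-lifting argument: connect $\theta$ inside $\Phi(\Pg)$ to an angle assignment where rigidity is already known, lift the path through $\Phi$ from $P_1$ and from $P_2$, and invoke uniqueness of lifts; this variant needs the same ingredients together with properness of $\Phi$.

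The step I expect to be the main obstacle is the extension of Montcouquiol--Weiss past convex polyhedra: their proof controls the sign of the quadratic form governing infinitesimal deformations precisely because all dihedral angles are $\leq\pi$, and that control is lost for non-convex polyhedra, so the local-diffeomorphism property must be re-derived by a different mechanism. A secondary difficulty is ensuring that $\Phi(\Pg)$ is connected enough and $\Phi$ proper enough to carry out whichever globalization one uses; degenerations of non-convex polyhedra are considerably harder to exclude than in the convex setting handled by the classical Cauchy--Andreev--Rivin circle of ideas.
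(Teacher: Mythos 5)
Your overall architecture matches the paper exactly: introduce the Weak Stoker Conjecture, observe that the Volume Conjecture forces volume to be a function of $(\Gamma,\theta)$ alone, and then use the Schl\"afli identity together with the local-diffeomorphism property of the dihedral-angle map to recover edge lengths as $-2\,\partial V/\partial\theta_e$. That recovers the paper's Proposition \ref{prop:edgelengths} essentially verbatim. However, there is a genuine gap at the final step, and it sits precisely where the paper does its real technical work. Knowing the dihedral angles and the edge lengths does \emph{not} determine a polyhedron with non-triangular faces: a face with four or more edges is not pinned down by its side lengths, so your ``reconstruction argument --- assembling each face as a hyperbolic polygon from these data'' fails exactly in the case you flag as ``more delicate,'' and the path-lifting variant is not carried out (it would need connectivity of the angle image and properness of $\Psi$, neither of which you establish). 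The paper's resolution is to get the lengths of the \emph{diagonals} as well: add a diagonal $\tilde e$ to a non-triangular face to form $\Gamma'$, regard $P_1,P_2$ as boundary points of the space $\mathcal{P}_{\Gamma'}^{\tilde e}$ of polyhedra with \emph{weak} $1$-skeleton (where $\tilde e$ is allowed to have dihedral angle exactly $\pi$), show this is a manifold with boundary on which the angle map is still a local diffeomorphism, deform so that the angle at $\tilde e$ becomes $t\pi$ with $t<1$, and apply Schl\"afli once more to conclude the diagonal lengths agree. Congruence of all faces then follows, and a Cauchy-type rigidity theorem (Rivin--Hodgson) finishes the proof. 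Without the diagonal-length step your argument does not close.

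You also misidentify the main technical obstacle. The polyhedra in question are compact intersections of half-spaces, hence convex, so no extension of Montcouquiol--Weiss ``past convexity'' is needed; their result applies as is on the interior of $\Pg$. The extension the argument actually requires is to the \emph{degenerate boundary stratum} where two faces adjacent to the added diagonal become coplanar (angle $\pi$ at $\tilde e$): one must show that the angle map remains a local diffeomorphism there, which the paper does by a duality argument producing a deformation that opens the angle at $\tilde e$ with non-vanishing derivative. This is a different, and in some ways more tractable, difficulty than the one you anticipate.
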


Over the next half century there have been several results in this direction: as soon as some conditions are required of the polyhedra (for example, acute-angled in \cite{andreev}, simple in \cite{rivhodg} or hyperideal in \cite{bonbao}) then the conjecture is known to be true. Moreover \cite{mont} and \cite{weiss} independently proved that the conjecture is true locally, i.e. the dihedral angles are local coordinates for the space of polyhedra with fixed $1$-skeleton.

In this paper, we propose a weak version of the Stoker conjecture:

\begin{cnj*}[Weak Stoker conjecture]
Let $P_1$ and $P_2$ be two polyhedra with $1$-skeleton $\Gamma$ and the same dihedral angle at each corresponding edge. Then $P_1$ and $P_2$ have the same volume.
\end{cnj*}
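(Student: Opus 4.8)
The plan is to read the equality of volumes directly off the Volume Conjecture for polyhedra, using the single structural fact that the quantum invariant appearing on the asymptotic side of that conjecture is a purely combinatorial function of the $1$-skeleton $\Gamma$ together with the dihedral angles, and of nothing else about the geometry of the polyhedron.

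First I would fix the precise form of the Volume Conjecture I am assuming. To a polyhedron $P$ with $1$-skeleton $\Gamma$ and dihedral angles $\{\theta_e\}_{e}$ it attaches, for each integer level $r$, an admissible coloring $\mathbf{c}^{(r)}$ of the edges of $\Gamma$ by natural numbers for which $\tfrac{2\pi}{r}c_e^{(r)}$ converges to a fixed function of $\theta_e$ (for instance the exterior angle $\pi-\theta_e$), together with a quantum state-sum invariant $J_r(\Gamma,\mathbf{c}^{(r)})$ of the colored graph. The conjecture then asserts
\[
 \vol(P)=\lim_{r\to\infty}\frac{2\pi}{r}\log\bigl|J_r(\Gamma,\mathbf{c}^{(r)})\bigr|.
\]
The point I want to isolate is that the right-hand side never refers to $P$ beyond the pair $(\Gamma,\{\theta_e\})$: the admissibility constraints, the colorings $\mathbf{c}^{(r)}$, and the value of $J_r$ are all determined once $\Gamma$ and the angles are prescribed.

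Granting this, the deduction is immediate. Let $P_1$ and $P_2$ share the $1$-skeleton $\Gamma$ and have equal dihedral angle $\theta_e$ at every edge $e$. I would choose one and the same sequence of colorings $\mathbf{c}^{(r)}$ — the sequence determined by $\Gamma$ and the common angles, say $c_e^{(r)}=\lfloor \tfrac{r(\pi-\theta_e)}{2\pi}\rfloor$ — and apply the Volume Conjecture to each polyhedron. Both volumes are then computed by the very same expression,
\[
 \vol(P_1)=\lim_{r\to\infty}\frac{2\pi}{r}\log\bigl|J_r(\Gamma,\mathbf{c}^{(r)})\bigr|=\vol(P_2),
\]
so they coincide. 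Note that I never need the limit to be independent of the approximating sequence; it is enough that one admissible sequence, built from the shared data, computes $\vol(P_i)$ for each $i$.

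The main obstacle is therefore not this final step but the bookkeeping that makes the quantum side of the Volume Conjecture manifestly a function of $(\Gamma,\{\theta_e\})$ alone. I would check three points: that a tuple of angles actually realized by a hyperbolic polyhedron is automatically admissible, so that the approximating colorings exist for $P_1$ and $P_2$ at once (admissibility being a condition on $\Gamma$ and the angles only); that the normalization of $J_r$ hides no dependence on the isometric placement of $P_i$ in $\h^3$; and that the correspondence $\theta_e\mapsto c_e^{(r)}$ can be pinned down canonically, so that identical angles force identical colorings. Each of these is a condition on $\Gamma$ and the angles, hence holds simultaneously for both polyhedra, and once they are settled the equality of volumes follows with no further geometric input.
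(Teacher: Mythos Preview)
Your proposal is correct and takes essentially the same approach as the paper: the paper's Proposition at the end of Section~\ref{sec:quantum} deduces the Weak Stoker Conjecture from the Volume Conjecture by the identical observation that the Yokota invariant $Y_r(S^3,\Gamma,col_r,e^{2\pi i/r})$ depends only on the graph $\Gamma$ and the colorings $col_r$, which in turn are determined by the common dihedral angles, so one shared sequence yields $\mathrm{Vol}(P_1)=\mathrm{Vol}(P_2)$. Note that neither you nor the paper proves the Weak Stoker Conjecture unconditionally---it remains open---only the implication from the Volume Conjecture; your extra bookkeeping about admissibility and normalization is prudent but not something the paper itself works through.
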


Clearly the Stoker conjecture implies the weak Stoker Conjecture; in Corollary \ref{cor:equiv} we prove that in fact the converse also holds.

The interest of the weak Stoker Conjecture is two-fold: firstly, it could be a more tractable problem than the Stoker conjecture; secondly, it is actually a consequence of the Volume Conjecture for polyhedra, providing a connection between a long standing open problem in hyperbolic geometry and quantum invariants. 

Since the introduction of quantum invariants in \cite{witten}, many conjectures have been proposed to link their asymptotic behavior to several topological and geometric properties. Among them the subject of the most research has been the plethora of volume conjectures, relating the growth of quantum invariants of objects to the volume of hyperbolic structures on them. The first such conjecture was proposed by Kashaev (\cite{kash}, see also \cite{murmur}) relating the growth of the Jones polynomial of a knot evaluated at a certain root of unity to the volume of the complement of the knot. More recently, Chen and Yang \cite{cyvolconj} proposed a volume conjecture for the Turaev-Viro invariant of compact manifolds. Both of these conjectures have extensions relating higher order terms of the asymptotic expansion of a quantum invariant to different quantities such as the Reidemeister torsion. These conjectures have been proven for certain families of knots or manifolds (see for example \cite{ohtdf} and \cite{BDKY2018} for results about the Volume Conjecture of Chen and Yang, and \cite{ohtsuki52},\cite{ohtsuki6}, \cite{ohtsuki7} for results on the Kashaev volume conjecture).

A related series of conjectures connects the quantum invariants of a planar graph $\Gamma$ in $S^3$ to the volume of hyperbolic polyhedra that have $\Gamma$ as a $1$-skeleton. The first such conjecture was given in \cite{volconjpoly} for trivalent planar graphs and simple hyperideal polyhedra; it was extended in \cite{murkolp} and later in \cite{maxvolconj} to any polyhedral (i.e. planar and $3$-connected) graph and any hyperbolic polyhedron.

\begin{cnj*}
 [The volume conjecture for polyhedra]
Let $P$ be a hyperbolic polyhedron with dihedral angles
  $\alpha_1,\dots,\alpha_m$ at the edges $e_1,\dots,e_m$, and $1$-skeleton $\Gamma$. Let $col_r$ be a sequence of $r$-admissible colorings of the edges $e_1,\dots,e_m$ of $\Gamma$ such that 
  \begin{displaymath}
   2\pi\lim_{r\ra+\infty}\frac{col_r(e_i)}{r}=\pi-\alpha_i.
  \end{displaymath}
Then
\begin{displaymath}
 \lim_{r\ra+\infty}\frac{\pi}{r}\log\left\lvert Y_r(S^3,\Gamma,col_r,e^{2\pi i/r})\right\rvert=\mathrm{Vol}(P).
\end{displaymath}
where $Y_r\left(S^3,\Gamma,col,e^{2\pi i/r}\right)$ is the Yokota invariant of the graph $\Gamma$ with edges colored with $col_r$ evaluated at $e^{2\pi i/r}$ (see Section \ref{sec:quantum} and \cite[Section 2]{maxvolconj} for more details and definitions).
\end{cnj*}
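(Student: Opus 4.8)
The plan is to follow the saddle-point strategy that has succeeded for the Kashaev and Chen--Yang volume conjectures, now in the spin-network setting. The first step is to turn the Yokota invariant into an explicit finite state sum. Using Kauffman--Lins recoupling theory one reduces the planar trivalent spin network $(\Gamma,col_r)$ to a combination of theta symbols and quantum $6j$-symbols (tetrahedral coefficients), each of which is a closed alternating sum of ratios of quantum factorials $\{n\}!=\prod_{k=1}^n(q^{k/2}-q^{-k/2})$ at $q=e^{2\pi i/r}$. After this reduction $Y_r(S^3,\Gamma,col_r,e^{2\pi i/r})$ becomes a sum $\sum_\sigma T_r(\sigma)$ over internal admissible colorings $\sigma$ ranging in a polytope cut out by the $r$-admissibility (triangle) inequalities, each $T_r(\sigma)$ being a signed product of quantum factorials.

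The second step is to extract the exponential growth rate of each summand from the asymptotics of the quantum factorial. With $q=e^{2\pi i/r}$ one has, as $r\ra+\infty$ with $n/r$ in a compact subinterval of $(0,1)$,
\begin{displaymath}
\frac{\pi}{r}\log\bigl\lvert\{n\}!\bigr\rvert=-\Lambda\!\left(\frac{\pi n}{r}\right)+O\!\left(\frac{\log r}{r}\right),
\end{displaymath}
where $\Lambda$ is the Lobachevsky function; its complex-analytic refinement is governed by the dilogarithm $\mathrm{Li}_2$, which is the object one must use once the internal parameters move off the real axis. Substituting these expansions, every summand takes the form $T_r(\sigma)=\exp\!\bigl(\tfrac{r}{\pi}\,\Phi(\sigma/r)+O(\log r)\bigr)$ for an explicit potential $\Phi$ assembled from Lobachevsky (or dilogarithm) terms in the edge parameters $col_r(e_i)/r\to(\pi-\alpha_i)/(2\pi)$ and the internal parameters.

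The third step is a Laplace/saddle-point analysis: applying Poisson summation to replace the sum over the admissibility polytope by an integral, I would locate the critical points of $\Phi$ in the internal variables for fixed boundary angles $\alpha_i$. Exponentiating the critical-point equations turns the derivatives $\Lambda'=-\log\lvert2\sin(\cdot)\rvert$ into multiplicative relations that should coincide with the Thurston-style gluing/consistency equations for assembling a hyperbolic polyhedron with the prescribed dihedral angles; the geometric solution is $P$ itself. The fourth, formal step is to identify $\mathrm{Re}\,\Phi$ at this saddle with $\vol(P)$ via Schl\"afli's differential formula: $\partial\vol(P)/\partial\alpha_i=-\tfrac12\ell_i$ (the length of $e_i$), while differentiating the on-shell value of $\Phi$ in $\alpha_i$ produces the same quantity by the chain rule, the internal derivatives vanishing at the critical point. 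Matching these derivatives and integrating from one base case (e.g. a regular configuration where both sides are explicit) yields $\lim_{r\ra+\infty}\frac{\pi}{r}\log\lvert Y_r(S^3,\Gamma,col_r,e^{2\pi i/r})\rvert=\vol(P)$.

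The hard part is entirely analytic, which is why the statement remains a conjecture. One must (i) prove the quantum-factorial expansion uniformly in $n$, including near the endpoints $n/r\to 0,1$ where $\Lambda$ degenerates and the naive Riemann-sum estimate fails; (ii) show that the geometric saddle genuinely dominates, i.e. that neither a competing complex critical point nor the boundary faces of the admissibility polytope (where a triangle inequality becomes an equality) contributes larger growth---this is the notorious ``which saddle wins'' obstruction already present for Chen--Yang; and (iii) control the phases of the signed terms $T_r(\sigma)$ to rule out catastrophic cancellation in $\sum_\sigma T_r(\sigma)$, the principal difficulty, which becomes especially delicate for polyhedra that are neither simple nor acute, where the recoupling reduction and hence $\Phi$ are substantially more intricate.
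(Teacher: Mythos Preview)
The statement you were given is a \emph{conjecture}, not a theorem: the paper does not prove it and does not claim to. It is stated as the Volume Conjecture for polyhedra (Conjecture~\ref{cnj:volconj} in the body), used as a standing hypothesis, and the paper's contribution is the implication ``Volume Conjecture $\Rightarrow$ weak Stoker $\Rightarrow$ Stoker''. There is therefore no proof in the paper to compare your proposal against.

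You already recognise this (``which is why the statement remains a conjecture''), so what you have written is not a proof but a research outline. As such it is a reasonable sketch of the standard saddle-point programme, and you correctly flag the three genuine obstructions: uniform quantum-factorial asymptotics near the boundary, dominance of the geometric saddle over competing critical points and polytope faces, and cancellation control in the signed state sum. None of these are resolved in the paper or, in full generality, anywhere in the literature; the partial results the paper cites (e.g.\ \cite{chenmur}, \cite{maxvolconj}) succeed only for restricted classes of polyhedra precisely because one or more of these obstacles can be bypassed there. So your proposal should be understood as an informed summary of the expected strategy and its known difficulties, not as a proof or even a proof sketch.
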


In Section \ref{sec:hyppol} we give an overview of hyperbolic polyhedra and we prove the technical results about the space of hyperbolic polyhedra with fixed $1$-skeleton that are needed in the proof of the main result. In Section \ref{sec:quantum} we give a brief review about the Yokota invariant and the Volume conjecture for polyhedra. Finally in Section \ref{sec:main} we prove the equivalence of the Weak Stoker Conjecture and the Stoker Conjecture. Quantum invariants are only involved in Section \ref{sec:quantum}; a reader only interested in the geometric results of this paper can safely skip it if so desired.

\section{Hyperbolic polyhedra}\label{sec:hyppol}

\begin{dfn}\label{dfn:poly}
 A \emph{hyperbolic polyhedron} is a compact intersection of half spaces of $\mathbb{H}^3$.
\end{dfn}

\begin{oss}
 Throughout the literature, other types of hyperbolic polyhedra have also been considered, for example polyhedra with ideal or hyperideal vertices. The results of this paper can be easily extended to \emph{proper generalized hyperbolic polyhedra} (see \cite{maxvol}) with no ideal vertices, by simply truncating them to obtain a hyperbolic polyhedron of Definition \ref{dfn:poly}. However the techniques here presented cannot be directly applied to the case of polyhedra with some ideal vertices, since it is unknown in this case whether angles are local coordinates; the Stoker conjecture for polyhedra with only ideal vertices is known to be true by \cite{rivin}. 
\end{oss}

\begin{dfn}
 If $\Gamma$ is a graph we say that a polyhedron $P$ has $1$-skeleton $\Gamma$ if there exists a simplicial map $\phi:\Gamma\ra P$ sending vertices to vertices injectively and edges to edges.
\end{dfn}

\begin{oss}
 A graph $\Gamma$ is the $1$-skeleton of a polyhedron if and only if it is planar and $3$-connected (that is to say, it has more than three vertices and cannot be disconnected by removing two edges) \cite{steinitz}. Such a graph admits a unique embedding in $S^2$ up to planar isotopy and reflections \cite[Corollary 3.4]{fle}; therefore, even though $\Gamma$ is just an abstract graph, it is convenient to treat it as an embedded graph in the sphere. As such, we will often speak of its faces, meaning the faces of the cellularization of $S^2$ induced by an embedding.
\end{oss}

\begin{cnj}[The Stoker conjecture]\label{cnj:stok}
 Let $P_1$ and $P_2$ be two polyhedra with the same $1$-skeleton $\Gamma$. If for every edge $e$ of $\Gamma$, the dihedral angles of $P_1$ and $P_2$ at their corresponding edge are equal, then $P_1$ and $P_2$ are isometric.
\end{cnj}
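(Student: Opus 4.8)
The plan is to deduce the statement from the Weak Stoker conjecture (thus proving, together with the trivial converse, that the two are equivalent). Write $\Pg$ for the space of isometry classes of hyperbolic polyhedra with $1$-skeleton $\Gamma$. By the extension of the Montcouquiol--Weiss theorem to be proved in Section~\ref{sec:hyppol}, $\Pg$ is a smooth manifold and the dihedral-angle map $\alpha\colon\Pg\ra\R^m$, where $m$ is the number of edges of $\Gamma$, is a local diffeomorphism onto an open subset $\mathcal{A}_\Gamma\subseteq\R^m$; in particular $\alpha$ has discrete fibres and admits smooth local sections. The task is therefore to upgrade this local injectivity to global injectivity of $\alpha$.

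So let $P_1,P_2\in\Pg$ satisfy $\alpha(P_1)=\alpha(P_2)=\theta_0$. Choose a ball $B\ni\theta_0$ contained in $\mathcal{A}_\Gamma$ together with smooth sections $s_1,s_2\colon B\ra\Pg$ with $s_j(\theta_0)=P_j$. For every $\theta\in B$ the polyhedra $s_1(\theta)$ and $s_2(\theta)$ have $1$-skeleton $\Gamma$ and the same dihedral angles $\theta$, so the Weak Stoker conjecture gives $\vol(s_1(\theta))=\vol(s_2(\theta))$ throughout $B$. Now differentiate this identity and invoke the Schläfli formula
\begin{displaymath}
d\,\vol=-\tfrac12\sum_{i=1}^{m}\ell_i\, d\theta_i,
\end{displaymath}
where $\ell_i$ is the length of the edge $e_i$: since the two functions $\theta\mapsto\vol(s_j(\theta))$ coincide on $B$, they have the same differential, and hence $\ell_i(s_1(\theta))=\ell_i(s_2(\theta))$ for every edge $e_i$ and every $\theta\in B$; evaluating at $\theta_0$ gives $\ell_i(P_1)=\ell_i(P_2)$. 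Thus $P_1$ and $P_2$ share the same $1$-skeleton, the same dihedral angles, \emph{and} the same edge lengths.

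It then remains to observe that a compact hyperbolic polyhedron is determined up to isometry by its combinatorial type together with the full list of its dihedral angles and the full list of its edge lengths. I would prove this by a developing argument: fixing a flag of the polyhedron (a vertex, an incident edge, an incident face) pins down an isometric copy of that flag in $\h^3$, and then the edge lengths and dihedral angles force a unique isometric position for the remaining faces; the key point is that, although the dihedral angles at a vertex leave its spherical link under-determined and the edge lengths of a face leave that face under-determined, the two sets of data \emph{together} over-determine and hence fix the face angles, which is exactly the kind of statement handled by the vertex-link analysis of Section~\ref{sec:hyppol}. Granting this, $P_1\cong P_2$ and the conjecture follows. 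The conceptually new ingredient here is the Schläfli step of the second paragraph, which is what converts ``the volume is a function of the dihedral angles'' into ``the edge lengths are a function of the dihedral angles'' and is the reason the Weak Stoker conjecture suffices to recover the full Stoker conjecture; I expect the Schläfli computation itself and the endgame rigidity statement to be comparatively routine. The step demanding the most care is the foundational input of the first paragraph, namely that $\Pg$ is a manifold and $\alpha$ a local diffeomorphism \emph{all the way up to} dihedral angles equal to $\pi$ rather than only in the non-obtuse or the strictly-less-than-$\pi$ range treated previously, and this is precisely the generalization of Montcouquiol--Weiss established in Section~\ref{sec:hyppol}.
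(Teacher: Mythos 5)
Your first step --- using the local sections of the dihedral-angle map, the pointwise equality of volumes supplied by the Weak Stoker conjecture, and the Schl\"afli identity to conclude that $P_1$ and $P_2$ have equal edge lengths --- is exactly Proposition~\ref{prop:edgelengths} of the paper, and it is correct. The gap is in your endgame. You reduce to the claim that a compact hyperbolic polyhedron is determined up to isometry by its combinatorics, its dihedral angles, \emph{and} its edge lengths, and you propose to prove this by developing face by face. But to develop you must know each face up to congruence, and for a face with four or more sides the edge lengths do not determine the face: you are missing its diagonals (equivalently its face angles). Your fallback --- that the dihedral angles pin down the face angles through the vertex links --- fails at vertices of valence $\geq 4$, where a spherical polygon is not determined by its angles; and ``the two data sets together over-determine the face angles'' is not an argument, since an over-determined system can still have several solutions. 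There is no vertex-link analysis in Section~\ref{sec:hyppol} that could close this; the rigidity statement you are invoking is unproven and is essentially of the same nature as the Stoker conjecture itself.

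What the paper does instead --- and what the entire weak-$1$-skeleton machinery of Section~\ref{sec:hyppol} exists for --- is to recover the \emph{diagonal} lengths by the same Schl\"afli trick. Given a diagonal $\tilde e$ of a non-triangular face, one regards $P_1,P_2$ as boundary points of the manifold with boundary $\mathcal{P}_{\Gamma\cup\tilde e}^{\tilde e}$, where $\tilde e$ is an edge of dihedral angle $\pi$; Lemma~\ref{lem:mainlem} and Theorem~\ref{prop:deform} extend the local-diffeomorphism property of the angle map up to this boundary, so both polyhedra can be deformed to decrease the angle at $\tilde e$ below $\pi$ while fixing the other angles. Differentiating the (equal) volumes via Schl\"afli then yields equality of the lengths of corresponding diagonals (Proposition~\ref{prop:diaglengths}), hence congruence of corresponding faces, and one concludes by the Cauchy-type rigidity theorem of Rivin--Hodgson: two polyhedra with the same $1$-skeleton and congruent faces are isometric. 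To repair your proof you would either have to supply this diagonal step (which requires precisely the boundary extension of Montcouquiol--Weiss that you deferred to Section~\ref{sec:hyppol}, but applied at angle $\pi$, not merely below it) or give an independent proof of your ``angles plus edge lengths determine the polyhedron'' claim, which is far from routine.
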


\begin{cnj}[A weak version of the Stoker conjecture]\label{cnj:wstok}
 Let $P_1$ and $P_2$ be two polyhedra with the same $1$-skeleton $\Gamma$. If for every edge $e$ of $\Gamma$, the dihedral angles of $P_1$ and $P_2$ at their corresponding edge are equal, then $P_1$ and $P_2$ have the same volume.
\end{cnj}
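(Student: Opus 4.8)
The plan is to deduce this statement from the Volume Conjecture for polyhedra stated above, which I take as granted. The key observation—simple but decisive—is that the Yokota invariant $Y_r(S^3,\Gamma,col_r,e^{2\pi i/r})$ is an invariant of the \emph{colored graph} $(\Gamma,col_r)$ alone: it is manufactured from the combinatorics of $\Gamma$ together with the edge coloring and never refers to any particular polyhedron realizing $\Gamma$. Thus any two polyhedra sharing the $1$-skeleton $\Gamma$ are fed into one and the same family of quantum invariants, and it is precisely this coincidence that I will use to force their volumes to coincide.

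Concretely, suppose $P_1$ and $P_2$ both have $1$-skeleton $\Gamma$ and equal dihedral angles $\alpha_1,\dots,\alpha_m$ at the corresponding edges $e_1,\dots,e_m$. First I would fix a \emph{single} sequence $col_r$ of $r$-admissible colorings of the edges of $\Gamma$ with
\[
 2\pi\lim_{r\ra+\infty}\frac{col_r(e_i)}{r}=\pi-\alpha_i
\]
for every $i$; because $P_1$ and $P_2$ carry the same angle vector $(\alpha_i)$, one such sequence simultaneously satisfies the hypothesis of the Volume Conjecture for both polyhedra. I would then invoke the conjecture twice, once for $P_1$ and once for $P_2$, using this common sequence. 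Since the left-hand quantity $\frac{\pi}{r}\log\lvert Y_r(S^3,\Gamma,col_r,e^{2\pi i/r})\rvert$ depends only on $(\Gamma,col_r)$ and not on the chosen polyhedron, the two limits agree, and the conjecture yields $\vol(P_1)=\vol(P_2)$ at once.

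The part demanding genuine care is not the (now immediate) final comparison but the preliminary quantum-geometric bookkeeping: making precise, in the language of Section~\ref{sec:quantum}, that the Yokota invariant is a function of $(\Gamma,col_r)$ with no hidden polyhedral dependence, and checking that an $r$-admissible approximating sequence with the prescribed asymptotics actually exists. The latter is where I expect the main technical obstacle to lie; fortunately its existence is ensured by the fact that the target ratios $\frac{\pi-\alpha_i}{2\pi}$ arise from a genuine hyperbolic polyhedron, so that the admissibility constraints at each vertex are met in the limit and can be realized by a nearby integral coloring for all large $r$. Once these two points are in place, the full strength of the argument is carried by the polyhedron-independence of the invariant, which converts the analytic input of the Volume Conjecture directly into the desired equality of volumes.
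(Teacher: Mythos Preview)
Your proposal is correct and follows essentially the same approach as the paper: the paper's own argument (the Proposition in Section~\ref{sec:quantum}) also assumes the Volume Conjecture, picks a single coloring sequence $col_r$ approximating the common angle vector, and reads off $\vol(P_1)=\vol(P_2)$ from the polyhedron-independence of $Y_r(S^3,\Gamma,col_r,e^{2\pi i/r})$. You are slightly more explicit than the paper about why the Yokota invariant depends only on $(\Gamma,col_r)$ and about the existence of an admissible approximating sequence, but the logical skeleton is identical.
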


\begin{teo}
 Conjecture \ref{cnj:stok} is equivalent to Conjecture \ref{cnj:wstok}.
\end{teo}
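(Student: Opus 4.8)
The implication ``Stoker $\Rightarrow$ weak Stoker'' is immediate, since isometric polyhedra have the same volume. So the content is the converse. Assume the weak Stoker conjecture and let $P_1,P_2\in\Pg$ have the same dihedral angle at every corresponding edge; write $\theta\in\R^m$ for this common assignment of angles, where $m$ is the number of edges of $\Gamma$. We must show that $P_1$ and $P_2$ are isometric, and the plan is to first upgrade ``same dihedral angles'' to ``same dihedral angles and same edge lengths'', and then invoke rigidity.

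The first step uses the extension of the Montcouquiol--Weiss theorem proved in Section~\ref{sec:hyppol}: the dihedral-angle map $\alpha\colon\Pg\ra\R^m$ is a local diffeomorphism near both $P_1$ and $P_2$. Hence there are open neighbourhoods $U_i\ni P_i$ in $\Pg$ and a common open neighbourhood $W\ni\theta$ in $\R^m$ with $\alpha|_{U_i}\colon U_i\ra W$ a diffeomorphism; let $\sigma_i\colon W\ra U_i$ be its inverse. For every $\theta'\in W$ the polyhedra $\sigma_1(\theta')$ and $\sigma_2(\theta')$ both have $1$-skeleton $\Gamma$ and dihedral angles $\theta'$, so the weak Stoker conjecture gives $\vol(\sigma_1(\theta'))=\vol(\sigma_2(\theta'))$; that is, $\vol\circ\sigma_1=\vol\circ\sigma_2$ on $W$, and in particular their differentials agree at $\theta$. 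Now I would invoke the Schläfli differential formula $d\vol=-\tfrac12\sum_e \ell_e\,d\theta_e$, where $\ell_e$ is the length of the edge $e$: since $\alpha\circ\sigma_i=\mathrm{id}_W$, it follows that $d(\vol\circ\sigma_i)_\theta=-\tfrac12\sum_e \ell_e(P_i)\,d\theta_e$, and comparing the coefficients of the $d\theta_e$ yields $\ell_e(P_1)=\ell_e(P_2)$ for every edge $e$ of $\Gamma$.

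It then remains to prove the rigidity statement: a compact hyperbolic polyhedron is determined up to isometry by its $1$-skeleton together with all of its edge lengths and all of its dihedral angles. The approach is reconstruction: fix a flag (a vertex, an incident edge, an incident face) in a standard position in $\h^3$ and propagate; the issue is whether the lengths and dihedral angles force a unique developing map. When every vertex is trivalent this is clean, because at a trivalent vertex the three dihedral angles are the angles of a spherical triangle whose side lengths are the three face angles there, and a spherical triangle is rigid — so all face angles are determined, hence every face is a determined hyperbolic polygon, hence so is the whole polyhedron. \textbf{The main obstacle} is the presence of vertices of valence $\geq 4$, whose links are spherical polygons that are not rigid given only their angles; there the face angles cannot be read off locally, and one must argue globally, using that the face angles around each vertex and along each face are simultaneously subject to the spherical-link equations and the hyperbolic-polygon equations, together with the infinitesimal rigidity coming from the Montcouquiol--Weiss result, to pin down a unique consistent choice. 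Once this is established one concludes $P_1\cong P_2$, which proves the equivalence.
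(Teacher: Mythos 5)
Your first step is correct and is essentially Proposition \ref{prop:edgelengths} of the paper: the local diffeomorphism property of the dihedral-angle map (Montcouquiol--Weiss) lets you apply the weak conjecture on a whole neighbourhood of angle data, and differentiating $\vol\circ\sigma_1=\vol\circ\sigma_2$ via the Schl\"afli formula recovers the edge lengths. The gap is in your second step. The rigidity statement you would need --- that a compact hyperbolic polyhedron is determined by its $1$-skeleton, its edge lengths and its dihedral angles --- is not an available theorem, and the obstacle you flag yourself (vertices of valence $\geq 4$, whose spherical links are not determined by their angles, so the face angles cannot be read off locally) is genuine and is not resolved by the appeal to an unspecified ``global argument''. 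The Cauchy-type theorem that actually exists in this setting, \cite[Theorem 4.1]{rivhodg}, requires \emph{congruent faces}; and for a non-triangular face, neither the boundary edge lengths nor the ambient dihedral angles pin the face down up to congruence. So as written the proof does not close.

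The paper fills exactly this hole, and doing so is the purpose of all the weak-$1$-skeleton machinery of Section \ref{sec:hyppol}. Given a non-triangular face, one adds a diagonal $\tilde e$ to $\Gamma$ to form $\Gamma'$; then $P_1$ and $P_2$ lie on $\mathcal{P}_{\Gamma}=\partial\mathcal{P}_{\Gamma'}^{\tilde e}$, where the ``dihedral angle'' at $\tilde e$ equals $\pi$. Theorem \ref{prop:deform} (whose boundary case rests on Lemma \ref{lem:mainlem}) shows the angle map is a local diffeomorphism even at such boundary points, so your Schl\"afli argument can be run a second time, now perturbing the angle at $\tilde e$ away from $\pi$, to conclude that corresponding \emph{diagonals} of $P_1$ and $P_2$ have equal length (Proposition \ref{prop:diaglengths}). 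Triangulating each face by its diagonals then shows all corresponding faces are congruent, and \cite[Theorem 4.1]{rivhodg} concludes. You should replace your reconstruction/developing argument with this diagonal-length step; without it the proof stalls exactly where you say it does.
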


\begin{teo}\label{thm:main}
 If Conjecture \ref{cnj:volconj} is true, then the Stoker conjecture is true.
\end{teo}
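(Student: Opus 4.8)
The plan is to factor this implication through the Weak Stoker Conjecture. By the equivalence of the Stoker Conjecture (Conjecture~\ref{cnj:stok}) and the Weak Stoker Conjecture (Conjecture~\ref{cnj:wstok}) proved above, it suffices to show that the truth of Conjecture~\ref{cnj:volconj} forces Conjecture~\ref{cnj:wstok}. No further rigidity input is needed at that stage, since all of the genuinely geometric content — the Montcouquiol--Weiss-type statement that dihedral angles are local coordinates, and the global argument built on it — is already packaged inside that equivalence.

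So assume the Volume Conjecture for polyhedra holds, and let $P_1$ and $P_2$ be hyperbolic polyhedra sharing a $1$-skeleton $\Gamma$ with edges $e_1,\dots,e_m$ and having the same dihedral angle $\alpha_i$ at each $e_i$. The decisive observation is that the Yokota invariant $Y_r(S^3,\Gamma,col_r,e^{2\pi i/r})$ depends only on the colored graph $(\Gamma,col_r)$, and not at all on which polyhedron realizes $\Gamma$. I would therefore fix one sequence $col_r$ of $r$-admissible colorings of the edges of $\Gamma$ with
\begin{displaymath}
2\pi\lim_{r\to+\infty}\frac{col_r(e_i)}{r}=\pi-\alpha_i\qquad(i=1,\dots,m),
\end{displaymath}
and apply Conjecture~\ref{cnj:volconj} twice with this \emph{same} sequence: once to $P=P_1$, once to $P=P_2$. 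The left-hand sides of the two resulting limits are literally the same sequence of real numbers, so
\begin{displaymath}
\vol(P_1)=\lim_{r\to+\infty}\frac{\pi}{r}\log\left\lvert Y_r(S^3,\Gamma,col_r,e^{2\pi i/r})\right\rvert=\vol(P_2),
\end{displaymath}
which is exactly the conclusion of the Weak Stoker Conjecture.

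The one point that genuinely needs attention — and the step I expect to be the main (if modest) obstacle — is verifying that a sequence $col_r$ with the prescribed asymptotics exists, so that the two applications of the conjecture are not vacuous. Because $P_1$ is compact, each dihedral angle lies in $(0,\pi)$, so the target ratios $(\pi-\alpha_i)/(2\pi)$ lie strictly inside $(0,1/2)$; using the explicit description of $r$-admissibility recalled in Section~\ref{sec:quantum} (the triangle-type and parity conditions at each vertex of $\Gamma$ together with the global bound), one then produces, for every sufficiently large $r$, integer colors within bounded distance of $r(\pi-\alpha_i)/(2\pi)$ that satisfy all of these constraints, since the target vector is an interior point of the admissibility region and such a region is stable under $O(1)$ perturbations. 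With this in hand the computation above closes, and combined with the equivalence theorem it yields the claimed implication.
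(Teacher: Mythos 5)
Your proposal is correct and is essentially the paper's own argument: the paper likewise factors the implication through the Weak Stoker Conjecture, deriving it by applying Conjecture~\ref{cnj:volconj} to $P_1$ and $P_2$ with the same coloring sequence $col_r$ (so the two limits coincide), and then invoking Corollary~\ref{cor:equiv}. Your added remark on the existence of an admissible sequence $col_r$ is a detail the paper passes over silently, and it is a welcome one.
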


To prove Theorem \ref{thm:main} we will need to take a polyhedron $P$ with $1$-skeleton $\Gamma$, choose a diagonal $e$ of a non-triangular face, and deform $P$ to make $e$ into an edge. To do this, it is convenient to treat $P$ as a polyhedron with $1$-skeleton equal to $\Gamma\cup e$; thus we introduce a concept of \emph{weak} $1$-skeleta.

\begin{dfn}
Let $e\subseteq \Gamma$ be an edge. A polyhedron $P$ has \emph{weak $1$-skeleton} $(\Gamma,e)$ if there exists a simplicial map $\phi:\Gamma\ra P$ sending vertices to vertices injectively, edges different from $e$ to edges of $P$ and the edge $e$ to a face of $P$.
\end{dfn}

Put in other terms, in a polyhedron with weak $1$-skeleton $(\Gamma,e)$ we allow the two faces intersecting in $e$ to lie in the same plane. If this happens, the $1$-skeleton of $P$ is actually the graph obtained from $\Gamma$ by removing the edge $e$; otherwise, the $1$ skeleton of $P$ is simply $\Gamma$. Notice that if the $1$-skeleton of $P$ is $\Gamma$, then $P$ has weak skeleton $(\Gamma,e)$ for any edge $e$.

\begin{dfn}
 The space of polyhedra with $1$-skeleton $\Gamma$ is denoted $\mathcal{P}_\Gamma$. The space of polyhedra with weak $1$-skeleton $(\Gamma,e)$ is denoted $\mathcal{P}_\Gamma^e$. 
\end{dfn}
\begin{oss}
 A polyhedron with $F$ faces is an intersection of $F$ half spaces. The set $\mathcal{H}$ of half spaces in $\R^3$ is naturally a manifold; thus both $\mathcal{P}_\Gamma$  and $\mathcal{P}_\Gamma^e$ are naturally subsets of the manifold $\mathcal{H}^F$. Thus they are topological spaces; we will prove next that they are actually smooth manifolds.
\end{oss}

Let $\Gamma'$ be any $3$-connected planar graph and $\Gamma$ be the graph obtained from $\Gamma'$ by adding an edge $e$ to a non-triangular face of $\Gamma'$.

\begin{prop}
 The space $\mathcal{P}_\Gamma$ is a manifold; the space $\mathcal{P}_\Gamma^e$ is a manifold with boundary whose interior is $\mathcal{P}_\Gamma$ and whose boundary is $\mathcal{P}_{\Gamma'}$.
\end{prop}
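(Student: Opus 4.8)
The plan is to realize $\mathcal{P}_\Gamma^e$ explicitly inside a space of tuples of half-spaces and read off the manifold-with-boundary structure there. Recall that a polyhedron with $k$ faces is a point of $\mathcal{H}^k$, and that for any $1$-skeleton $\Lambda$ the set $\mathcal{P}_\Lambda\subseteq\mathcal{H}^{F(\Lambda)}$ is the locus where the open conditions of combinatorial non-degeneracy hold together with the \emph{concurrency equations} $c_\Lambda=0$: at each vertex $v$ of valence $d\geq4$ one imposes the $d-3$ conditions that the $d$ face-planes through $v$ meet in a common point. It is standard (see \cite{mont,weiss}; the angle restriction in those references is needed only for the injectivity of the angle map, not for the manifold structure) that $dc_\Lambda$ is surjective along $\{c_\Lambda=0\}$, so each $\mathcal{P}_\Lambda$ is a smooth manifold of dimension $E(\Lambda)+6$. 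Taking $\Lambda=\Gamma$ gives the first assertion of the Proposition; taking $\Lambda=\Gamma'$ will identify the boundary.

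For the second assertion I would work in $\mathcal{H}^F$ with $F=F(\Gamma)=F(\Gamma')+1$, recording the half-spaces of the $F$ faces of $\Gamma$. Write $F_1,F_2$ for the two faces of $\Gamma$ adjacent to $e$, so that $F_1\cup F_2$ is the non-triangular face $F_0$ of $\Gamma'$, and let $a,b$ be the endpoints of $e$; note that $a$ and $b$ are incident to both $F_1$ and $F_2$, and that $\deg_\Gamma(a),\deg_\Gamma(b)\geq4$. Near a point $P_0\in\mathcal{P}_{\Gamma'}$ --- which we view in $\mathcal{H}^F$ by duplicating the half-space of $F_0$ --- a tuple is a polyhedron with weak $1$-skeleton $(\Gamma,e)$ exactly when it satisfies the open non-degeneracy conditions, the equations $c_\Gamma=0$, and convexity along $e$. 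Let $M$ denote the local zero set of $c_\Gamma$. The key observation is that on $M$ the planes of $F_1$ and of $F_2$ both contain the vertices $a$ and $b$, hence both contain the geodesic through $a$ and $b$ and so differ by a rotation about it; consequently the dihedral angle $\theta_e$ at $e$ is a well-defined smooth function on $M$ near $P_0$, equal to $\pi$ exactly when the two planes coincide. Since imposing $\Pi_{F_1}=\Pi_{F_2}$ turns the $\Gamma$-concurrency equations into the $\Gamma'$-concurrency equations, one checks directly that, locally near $P_0$, $\mathcal{P}_\Gamma^e=M\cap\{\theta_e\leq\pi\}$, $\mathcal{P}_\Gamma=M\cap\{\theta_e<\pi\}$ and $\mathcal{P}_{\Gamma'}=M\cap\{\theta_e=\pi\}$. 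Hence, once we know $M$ is a smooth manifold near $P_0$, the whole statement reduces to the single claim that $d\theta_e$ does not vanish on $T_{P_0}M$: $\theta_e|_M$ is then a submersion near $P_0$, so $M\cap\{\theta_e\leq\pi\}$ is a manifold with interior $\mathcal{P}_\Gamma$ and boundary $\mathcal{P}_{\Gamma'}$.

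So the proof comes down to two infinitesimal statements at a point $P_0$ of $\mathcal{P}_{\Gamma'}$: that $dc_\Gamma$ is surjective there (so that $M$ is a manifold), and that $d\theta_e|_{T_{P_0}M}\neq0$. Geometrically the latter says that a nearby $\Gamma'$-polyhedron can be deformed, at nonzero rate in the fold parameter, by bending its face $F_0$ along the diagonal $e$ into a genuine edge, remaining all the while a convex polyhedron with $1$-skeleton $\Gamma$. The hard part --- and the step I expect to be the main obstacle --- is that this bending is \emph{not} a local modification: pulling the planes of $F_1$ and $F_2$ apart displaces every vertex of $F_0$ other than $a,b$, which in turn forces all the other face-planes to be readjusted so that the vertices of $\Gamma$ remain concurrent, and one must show that a compatible readjustment exists and depends smoothly on the fold parameter. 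This is an infinitesimal rigidity statement of Montcouquiol--Weiss type extended across the stratum $\mathcal{P}_{\Gamma'}$, where $\theta_e=\pi$; equivalently, it is the assertion that the dihedral angles of $\Gamma$ form a local coordinate system on $\mathcal{P}_\Gamma^e$ up to and including the boundary. I would prove it by linearizing the concurrency system, the delicate points being the vertices $a$ and $b$: at $P_0$ two of the $F$ half-spaces coincide there, the concurrency conditions at $a$ and $b$ drop rank, and the surjectivity of $dc_\Gamma$ and the non-vanishing of $d\theta_e$ have to be checked simultaneously through a careful local analysis at $a$, at $b$, and at the vertices of $F_0$ adjacent to them. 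Granting these infinitesimal facts, the manifold-with-boundary structure and the identification of its interior and boundary follow formally from the reduction above.
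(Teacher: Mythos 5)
Your setup is the same as the paper's (realize $\mathcal{P}_\Gamma$ and $\mathcal{P}_\Gamma^e$ inside $\mathcal{H}^F$ as the locus of the concurrency equations plus open non-degeneracy conditions, and identify the boundary stratum as the locus where the two planes at $e$ coincide, i.e.\ $\theta_e=\pi$), and your reduction of the statement to two infinitesimal facts at a point $P_0\in\mathcal{P}_{\Gamma'}$ is correct. But those two facts --- surjectivity of the differential of the $\Gamma$-concurrency system at $P_0$, where it a priori drops rank because $\Pi_{F_1}=\Pi_{F_2}$, and the non-vanishing of the boundary-defining differential on its kernel --- are exactly the content of the Proposition, and you do not prove them: you say you ``would prove it by linearizing the concurrency system'' and conclude ``granting these infinitesimal facts.'' That is a genuine gap, not a routine verification, and you correctly flag it as the main obstacle; a proof has to actually close it.

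For comparison, the paper closes it with a short algebraic argument rather than any bending construction. First, on the zero locus the boundary condition $\delta(v_{i_1,\dots,i_k},\Pi_1)=0$ forces $\partial\Pi_1$ and $\partial\Pi_2$ to share three points (the endpoints of $e$ and that vertex), hence is literally equivalent to the single condition $\Pi_1=\Pi_2$; second, a linear dependence among the equations $\Psi_{v,j}=0$ for $\Gamma$ together with $\Pi_1=\Pi_2$ would induce a linear dependence among the analogous concurrency equations for $\Gamma'$, contradicting \cite[Proposition 17]{mont}. In particular no deformation or rigidity statement ``across the stratum'' is needed for this Proposition. The statement you were reaching for --- that one can bend the face along the diagonal with the dihedral angle moving at non-zero speed --- is in the paper a separate result, Lemma \ref{lem:mainlem}, proven not by linearizing the concurrency system at $a$ and $b$ but by passing to the dual polyhedron and invoking the Montcouquiol--Weiss deformation that opens the dual edge $e^*$ to length $t$; and it is used later, for Theorem \ref{prop:deform} (angles as local coordinates on $\mathcal{A}_\Gamma^e$), not for the manifold-with-boundary structure. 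If you want to complete your route, you should either carry out the local linear-algebra analysis you sketch at $a$, $b$ and the neighboring vertices, or substitute the paper's reduction to the known independence of the $\Gamma'$ system.
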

\begin{proof}
The proof for $\mathcal{P}_\Gamma$ is in \cite[Proposition 17]{mont}; however since the proof for $\mathcal{P}_\Gamma^e$ is adapted from it we briefly sketch it.
 The space $\mathcal{P}_\Gamma$ is an open set of the space of Euclidean polyhedra with $1$-skeleton $\Gamma$ (namely, the Euclidean polyhedra with every vertex contained in the open unit ball); therefore if we prove that the latter is a manifold we are done.
 
 Denote with $F=\{f_1,\dots,f_n\}$ the set of faces of $\Gamma$, and with $\mathcal{H}$ the set of half spaces in $\R^3$. Then $\mathcal{P}_\Gamma$ is a subset of $\mathcal{H}^F$; the inclusion map sends $P\in\mathcal{P}_\Gamma$ to the $F$-tuple $\left(\Pi_{f_1},\dots,\Pi_{f_n}\right)$ of half-spaces supporting the faces of $P$.
 
The image of this inclusion is characterized as the tuples $\left(\Pi_{f_1},\dots,\Pi_{f_n}\right)\in\mathcal{H}^F$ such that:
 \begin{enumerate}
  \item for every $v\in \Gamma$ lying on the faces $f_{i_1},\dots,f_{i_k}$, the corresponding half planes $\Pi_{f_{i_1}},\dots,\Pi_{f_{i_k}}$ intersect in a single point $v_{i_1,\dots,i_k}\in\R^3$;\label{cond:1}
  \item if $j\notin\{i_1,\dots,i_k\}$, then $v_{i_1,\dots,i_k}$ is in the interior of $\Pi_j$.\label{cond:2}
 \end{enumerate}
 The first condition is open if $v$ is a trivalent vertex and closed otherwise.
 The second condition is clearly open; in particular it implies that the intersection defining a polyhedron must be non-redundant (i.e. if we remove any half space, the polyhedron changes) and that the intersection of half-planes around different vertices are different points of $\h^3$. 
 
 These conditions can also be described by equations. The first condition is equivalent to the set of equations
 
 $$\Psi_{v,j}=\mathrm{det}\left(\partial\Pi_{i_j}^*,\partial\Pi_{i_{j+1}}^*,\partial\Pi_{i_{j+2}}^*,\partial\Pi_{i_{j+3}}^*\right)=0$$
 
 for $j\in \{1,\dots,k-3\}$. Put in other words, this equation asks that the dual vectors to the boundaries  of $\Pi_{i_j},\dots,\Pi_{i_{j+3}}$ must be coplanar, which is equivalent to asking that $\partial\Pi_{i_j},\dots,\partial\Pi_{i_{j+3}}$ intersect in a single point. In \cite[Proposition 17]{mont} it is proven that these conditions are linearly independent.
 
 To describe the second condition as an equation we define the \emph{signed distance} of $v\in\R^3$ from a half space $H$, denoted $\delta(v,H)$, as the real number having absolute value the Euclidean distance from $v$ to $\partial H$, positive sign if $v\in H$ and negative sign otherwise. Then condition \ref{cond:2} is 
 
 $$\delta(v_{i_1,\dots,i_k}, \Pi_j)>0$$
 for all $j\notin{i_1,\dots,i_k}$. Since these conditions are open, then the space $\mathcal{P}_\Gamma$ is a manifold.

 Consider now the polyhedra with weak $1$-skeleton $(\Gamma,e)$, and suppose (after reindexing if necessary) that $f_1$ and $f_2$ are the two faces of $\Gamma$ adjacent to $e$. Then $\mathcal{P}_\Gamma^e$ is a subset of $\mathcal{H}^F$ as before, and the tuples $\left(\Pi_{f_1},\dots,\Pi_{f_n}\right)\in\mathcal{P}_\Gamma^e$ are characterized by the following conditions: 
 \begin{enumerate}
   \item for every $v\in \Gamma$ lying on the faces $f_{i_1},\dots,f_{i_k}$, the corresponding half planes $\Pi_{f_{i_1}},\dots,\Pi_{f_{i_k}}$ intersect in a single point $v_{i_1,\dots,i_k}\in\R^3$;\label{cond:1'}
  \item if $j\notin\{1,2,i_1,\dots,i_k\}$, then $v_{i_1,\dots,i_k}$ is in the interior of $\Pi_j$;\label{cond:2'}
  \item if $j=1\notin \{i_1,\dots,i_k\}$ and $2\notin \{i_1,\dots,i_k\}$ or viceversa, then $v_{i_1,\dots,i_k}$ is in the interior of $\Pi_j$;\label{cond:3'}
  \item if $j=1\notin\{i_1,\dots,i_k\}$ and $2\in\{i_1,\dots,i_k\}$ or viceversa, then $v_{i_1,\dots,i_k}$ belongs to $\Pi_j$.\label{cond:4'}
 \end{enumerate}

 The first and fourth conditions are closed while the second and third are open. We can describe these conditions as equations in a similar manner as before; the first conditions carry over without change. The second condition becomes
  $$\delta(v_{i_1,\dots,i_k}, \Pi_j)>0$$
 for all $j\notin{1,2,i_1,\dots,i_k}$; the third condition is analogous. The fourth condition is instead
 
 $$\delta(v_{i_1,\dots,i_k}, \Pi_j)\geq0$$
 if $j=1\notin\{i_1,\dots,i_k\}$ and $2\in\{i_1,\dots,i_k\}$ or viceversa.
 
 To prove that $\mathcal{P}_\Gamma^e$ is a manifold, we need to show that the equations of condition \ref{cond:1'} and the equalities in condition \ref{cond:4'} are independent.
 
 Now notice that if $\delta(v_{i_1,\dots,i_k},\Pi_1)=0$, then $v_{i_1,\dots,i_k}$ belongs to $\partial\Pi_1$. This can only happen if $2\in\{i_1,\dots,i_k\}$ by the aforementioned conditions. Since $\partial \Pi_1$ and $\partial \Pi_2$ share three distinct points (the two vertices that are endpoints of $e$ and $v_{i_1,\dots,i_k}$) they must coincide; therefore this condition is equivalent to $\Pi_1=\Pi_2$. 
 
 Suppose then that there is a linear dependence for the equations $\Psi_{v,i}=0$ and $\Pi_1=\Pi_2$; this would imply that there is a linear dependence for the analogous system of equations in $\mathcal{P}_{\Gamma'}$ which is false.
 
 Therefore $\mathcal{P}_\Gamma^e$ is a smooth manifold whose boundary is $\mathcal{P}_{\Gamma'}$ and whose interior is $\mathcal{P}_\Gamma$.

\end{proof}
\begin{cor}\label{cor:manifold}
 The space $\mathcal{A}_\Gamma$ is a manifold; the space $\mathcal{A}_\Gamma^e$ is a manifold with boundary whose interior is $\mathcal{A}_\Gamma$ and whose boundary is $\mathcal{A}_{\Gamma'}$. 
\end{cor}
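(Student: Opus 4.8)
The plan is to deduce the corollary from the previous Proposition by exhibiting $\mathcal{A}_\Gamma$, $\mathcal{A}_\Gamma^e$ and $\mathcal{A}_{\Gamma'}$ as quotients of $\mathcal{P}_\Gamma$, $\mathcal{P}_\Gamma^e$ and $\mathcal{P}_{\Gamma'}$ by a free and proper group action and then applying the quotient manifold theorem. Indeed, $\mathcal{A}_\Gamma$ (and likewise for the weak skeleton and for $\Gamma'$) is the space of polyhedra with $1$-skeleton $\Gamma$ up to isometry, i.e.\ the quotient of $\mathcal{P}_\Gamma$ by the natural action of $G:=\mathrm{Isom}(\mathbb{H}^3)$. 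The group $G$ is a Lie group acting smoothly on $\mathbb{H}^3$, hence on the manifold $\mathcal{H}$ of half-spaces, hence diagonally and smoothly on $\mathcal{H}^F$; since an isometry carries a polyhedron with $1$-skeleton $\Gamma$ (resp.\ weak $1$-skeleton $(\Gamma,e)$, resp.\ $1$-skeleton $\Gamma'$) to another one with the same combinatorial type, the subsets $\mathcal{P}_\Gamma$, $\mathcal{P}_\Gamma^e$, $\mathcal{P}_{\Gamma'}\subseteq\mathcal{H}^F$ are $G$-invariant, and the description of $\partial\mathcal{P}_\Gamma^e$ by the equation $\Pi_{f_1}=\Pi_{f_2}$ obtained above shows that the inclusion $\mathcal{P}_{\Gamma'}\hookrightarrow\mathcal{P}_\Gamma^e$ is $G$-equivariant.

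Next I would check that the action is free and proper. Freeness: if $g\in G$ fixes a tuple $(\Pi_{f_1},\dots,\Pi_{f_n})$ representing a polyhedron $P$, then $g$ preserves every supporting half-space of $P$; as $P$ is compact with nonempty interior, four of these half-spaces are in general position, and an isometry of $\mathbb{H}^3$ preserving four half-spaces in general position is the identity. Properness: choosing in a $G$-equivariant and continuous way an interior point of each polyhedron (for instance its incenter) reduces properness of the action on $\mathcal{P}_\Gamma^e$ to the classical properness of the action of $G$ on $\mathbb{H}^3$; alternatively one argues directly that a sequence of isometries carrying one convergent sequence of polyhedra to another must be precompact in $G$.

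With freeness and properness established, the quotient manifold theorem gives that $\mathcal{A}_\Gamma=\mathcal{P}_\Gamma/G$ is a smooth manifold, and its manifold-with-boundary version --- applicable because the free proper action preserves the splitting of $\mathcal{P}_\Gamma^e$ into its interior $\mathcal{P}_\Gamma$ and its boundary $\mathcal{P}_{\Gamma'}$ --- gives that $\mathcal{A}_\Gamma^e=\mathcal{P}_\Gamma^e/G$ is a manifold with boundary, with interior $\mathcal{P}_\Gamma/G=\mathcal{A}_\Gamma$ and boundary $\mathcal{P}_{\Gamma'}/G=\mathcal{A}_{\Gamma'}$, as claimed. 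The step demanding the most care is the properness of the action together with the identification of the boundary of the quotient; this can be made entirely routine by working with an explicit local slice instead of the abstract quotient, i.e.\ by normalizing each polyhedron so that a chosen flag (a vertex, an edge at it and a face at that edge) sits at a fixed basepoint of $\mathbb{H}^3$, which realizes $\mathcal{A}_\Gamma$, $\mathcal{A}_\Gamma^e$, $\mathcal{A}_{\Gamma'}$ as submanifolds of $\mathcal{P}_\Gamma$, $\mathcal{P}_\Gamma^e$, $\mathcal{P}_{\Gamma'}$ transverse to the $G$-orbits.
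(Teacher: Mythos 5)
Your proposal is correct and follows essentially the same route as the paper: the paper likewise establishes that $\mathrm{Iso}(\mathbb{H}^3)$ acts freely (via four fixed points/faces in generic position) and properly (citing \cite[Lemma 8]{bonbao}, which is the ``direct'' argument you sketch) on $\mathcal{P}_\Gamma$ and $\mathcal{P}_\Gamma^e$, and then quotients. One small caution: the ``incenter'' of a convex body need not be unique, so that particular equivariant section is not quite well-defined; your alternative direct properness argument (or a barycenter-type point) is the safer choice.
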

\begin{proof}
 We need to show that the group $\textrm{Iso}(\h^3)$ acts on $\Pg$ freely and properly. To see that the action is free notice that since the polyhedra in $\Pg$ are non degenerate, an isometry preserving $P\in\Pg$ must fix at least $4$ points in generic position (i.e. not all contained in a plane) and thus must be the identity. To prove that the action is proper we can repeat verbatim the proof of \cite[Lemma 8]{bonbao}. Notice that the proof in \cite{bonbao} only requires that the polyhedra considered have $4$ faces in generic position. Therefore the same argument carries over unchanged for $\Pg^e$.
\end{proof}

The following lemma essentially states that for any polyhedron $P$ and any diagonal $e$, we can deform $P$ to ``add'' $e$ as an edge, increasing the angle at $e$ with a non-zero derivative.

\begin{lem}\label{lem:mainlem}
 For any $P\in\partial\Pg^e$ there exists an $\epsilon>0$ and a family $P_t$ for $t\in[0,\epsilon)$ such that
 \begin{itemize}
  \item $P_0=P$;
  \item $P_t$ is in the interior of $\Pg^e$ for any $t>0$;
  \item the dihedral angle of $P_t$ at $e$ is equal to a certain smooth function $f(t)$ with non-vanishing derivative at $0$.
 \end{itemize}
\end{lem}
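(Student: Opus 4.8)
The plan is to produce the family $P_t$ by hand, rotating the supporting half-space of one of the two faces of $P$ adjacent to $e$ about the line containing $e$, and then checking that the resulting polyhedra still lie in $\Pg^e$.

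Since $P\in\partial\Pg^e=\mathcal{P}_{\Gamma'}$, the two faces $f_1,f_2$ of $P$ adjacent to $e$ in $\Gamma$ are coplanar. As in the proof of the Proposition I would view $P$ as a Euclidean polyhedron with all vertices in the open unit ball of $\R^3$, and pick coordinates so that $f_1\cup f_2$ lies in $\{z=0\}$, that $P\subseteq\{z\le 0\}$, and that $e$ is contained in the line $\ell=\{y=z=0\}$ with $f_1$ on the side $\{y\ge 0\}$. For $t\ge 0$ let $\Pi^t=\{z\le -ty\}$ be the half-space obtained from the common supporting half-space $\{z\le 0\}$ of $f_1$ and $f_2$ by rotating it about $\ell$, and let $P_t$ be the polyhedron with supporting half-space $\Pi^t$ at $f_1$, supporting half-space $\{z\le 0\}$ at $f_2$, and the same supporting half-spaces as $P$ at all other faces — except that if a vertex of $f_1$ other than an endpoint of $e$ has valence $\ge 4$, the finitely many supporting half-spaces through that vertex must also be corrected; these corrections exist and depend smoothly on $t$ by the implicit function theorem, since the incidence equations $\Psi_{v,j}$ have linearly independent differentials, which is precisely what is proved (after \cite{mont}) in the Proposition.

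Next I would verify that for small $t\ge 0$ this tuple of half-spaces still satisfies the open incidence conditions from the proof of the Proposition, so that $P_t\in\Pg^e$, with $P_0=P$, and with $P_t$ in the interior $\Pg$ for $t>0$ because then the supporting half-spaces at $f_1$ and $f_2$ are distinct and $e$ is a genuine edge. The one point that is not purely formal is that, as $\Pi^t$ tilts inward, near each vertex of $f_1$ it meets the adjacent (possibly corrected) faces in a single point which moves into the interior of $P$, so that no vertex of $f_1$ is destroyed and the combinatorial type is preserved; the remaining verifications are continuity and openness. Finally, near an interior point of $e$ the polyhedron $P_t$ is cut out locally by $\{z\le 0\}$ and $\{z\le -ty\}$ only, so the Euclidean dihedral angle at $e$ equals $\pi-\arctan t$; as the hyperbolic dihedral angle is, near $\pi$, a strictly monotone smooth function of the Euclidean one, the hyperbolic dihedral angle of $P_t$ at $e$ is a smooth function $f(t)$ with $f(0)=\pi$ and $f'(0)\ne 0$.

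The step I expect to be the main obstacle is the middle one: showing that tilting the supporting half-space of $f_1$ keeps $P_t$ inside $\Pg^e$, i.e. that the combinatorial type is preserved for small $t$ and that the higher-valence vertices of $f_1$ can be absorbed by the implicit-function-theorem correction. Once that is in place, reading $f$ off the local picture at $e$ and checking $f'(0)\ne 0$ is immediate.
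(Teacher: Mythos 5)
Your strategy (tilt the supporting half-space of $f_1$ about the line containing $e$, then correct the other half-spaces) is genuinely different from the paper's, and the step you yourself flag as the main obstacle is a real gap, not a routine verification. The linear independence established in the Proposition says that the differentials $d\Psi_{v,j}$ are independent as functionals on the whole tangent space of $\mathcal{H}^F$, i.e.\ that the incidence variety is a smooth manifold near $P$. What your implicit-function-theorem step needs is strictly stronger: that these differentials remain independent when restricted to variations in which $\Pi_{f_1}$ is prescribed (the tilt) and $\Pi_{f_2}$ is held fixed --- equivalently, that the partial differential of the incidence system with respect to the remaining half-spaces is surjective, so the tilt direction lifts to a path in the incidence variety. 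Nothing in the Proposition gives this, and it is exactly the content of the lemma: the mere fact that $\Pg^e$ is a manifold with boundary already yields inward paths, but along an arbitrary such path the angle at $e$ could leave $\pi$ with vanishing derivative; your first-order control comes entirely from prescribing the two planes, which is the unproved part. Note also that the corrections cannot be confined to the half-spaces through the non-trivalent vertices of $f_1$: moving such a half-space disturbs the incidence equations at its \emph{other} vertices of valence $\geq 4$, so the constraints are globally coupled; and the endpoints of $e$ have valence $\geq 4$ in $\Gamma$, hence carry equations of their own (your tilt preserves them because they lie on the rotation axis, but they re-enter the system as soon as any other plane through them is moved). A smaller point: the hyperbolic dihedral angle is not a function of the Euclidean one alone (it depends on the position of the planes relative to the sphere at infinity), but along your specific family, after normalizing $e$ through the centre, the formula $f(t)=\pi-\arctan t$ and $f'(0)\neq 0$ are fine.

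The paper sidesteps precisely this difficulty by dualizing. For $P\in\partial\Pg^e$ the dual polyhedron $P^*$ has $1$-skeleton $\Gamma'^*$, in which the edge $e^*$ dual to $e$ is collapsed to the vertex dual to the doubled face $F_1=F_2$; after rescaling into the ball, \cite[Theorem 1.2 and Section 4.4]{montweiss} provides a smooth family $Q_t$ of polyhedra with $1$-skeleton $\Gamma^*$ in which the edge corresponding to $e^*$ has length exactly $t$. Dualizing back gives $P_t$, and the non-vanishing of $f'(0)$ follows from an explicit Lorentzian computation of the angle between the two dual points as that edge opens up. In other words, the existence of a deformation opening the degenerate edge with first-order control is imported from Montcouquiol--Weiss rather than extracted from the incidence equations; to keep your direct construction you would have to prove the restricted-independence (surjectivity) statement above, which is a rigidity-type result in its own right and not a corollary of the Proposition.
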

\begin{proof}
 Since $P$ is in $\partial\Pg^e$ then the two faces $F_1$ and $F_2$ intersecting in $e$ actually coincide. As before name $\Gamma'$ the graph obtained from $\Gamma$ by removing $e$. Consider $P\subseteq\h^3\subseteq\R^3$ and view it as a Euclidean polyhedron. The dual polyhedron $P^*$ is a Euclidean polyhedron with $1$-skeleton $\Gamma'^*$ which is contained in a ball of radius $r$ centered in $0$.
 Notice that $P^*$ has a vertex $V$ which is dual to $F_1=F_2$. The graph $\Gamma'^*$ is obtained from $\Gamma^*$ by shrinking the edge $e^*$ down to a point which we denote with $v$.
 
 Consider $\Phi$ the homothety centered in $0$ of factor $\frac{1}{2r}$; then $\Phi(P^*)$ is a polyhedron contained in a ball centered in $0$ of radius $\frac{1}{2}$ and thus we can consider it as a hyperbolic polyhedron. By \cite[Theorem 1.2 and Section 4.4]{montweiss} there exists a smooth family $Q_t$ (for $t$ in a right neighborhood of $0$) with the following properties:
 \begin{itemize}
  \item $Q_0=\Phi(P^*)$;
  \item $Q_t$ has $1$-skeleton $\Gamma^*$;
  \item the length of the edge of $Q_t$ corresponding to $e^*$ is equal to $t$.
 \end{itemize}
Then the family $\Phi^{-1}(Q_t^*)$ is smooth and clearly satisfies the first two requirements in the statement of this lemma. To verify the third, consider $F^t_1$ and $F^t_2$, and their dual points $v^t_1$ and $v^t_2$. Denote with $V_1^t$ and $V_2^t$ the points on the one-sheeted hyperboloid $H_S^3\subseteq \R^{1,3}$ that project down to $v_1^t$ and $v_2^t$ respectively; up to applying a suitable family of isometries $g_t$ and reparametrizing $Q_t$, we can assume that $v^t_1=\left(\frac{1}{\sqrt{\alpha^2-1}},\frac{\alpha}{\sqrt{\alpha^2-1}},0,0\right)\in\R^{1,3}$ and $v^t_2=\left(\frac{1}{\sqrt{\alpha^2+t^2-1}},\frac{\alpha}{\sqrt{\alpha^2+t^2-1}},\frac{t}{\sqrt{\alpha^2+t^2-1}},0\right)$. The angle between $F^t_1$ and $F_2^t$ is given by the formula
\begin{gather*}
 \theta_e^t=\textrm{ArcCos}\left(-\langle v_1^t,v_2^t\rangle\right)=\textrm{ArcCos}\left(\frac{1-\alpha^2}{\sqrt{\left(\left(\alpha^2-1\right)\left(\alpha^2+t^2-1\right)\right)}}\right)
\end{gather*}

The derivative of $\theta_e^t$ with respect to $t$ evaluated in $t=0$ can be computed to be
\begin{displaymath}
-\sqrt{\frac{1}{\alpha^2-1}}
\end{displaymath}
which is different from $0$ (notice that $\alpha=1$ would imply that $v_1^t$ is an ideal point which is not the case).
\end{proof}

\begin{teo}\label{prop:deform}
 The map $\Psi:\mathcal{A}_\Gamma^e\ra \R^n$ assigning to each polyhedron up to isometry its tuple of dihedral angles is a local diffeomorphism.
\end{teo}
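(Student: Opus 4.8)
The plan is to handle the interior $\mathcal{A}_\Gamma$ of $\mathcal{A}_\Gamma^e$ and the boundary $\mathcal{A}_{\Gamma'}$ separately (recall $\mathcal{A}_\Gamma^e$ is a manifold with boundary by Corollary \ref{cor:manifold}, and that $\Psi$ is smooth on it, the dihedral angle being a smooth function of the supporting half-spaces and equal to the constant $\pi$ along the boundary), using the theorem of Montcouquiol and Weiss on the interior and Lemma \ref{lem:mainlem} to control the direction transverse to the boundary. Throughout, write $\R^n=\R^{n-1}\times\R$, where the last coordinate is the dihedral angle at $e$ and the first $n-1$ coordinates are the angles at the edges of $\Gamma'$.

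On the interior $\mathcal{A}_\Gamma$ every polyhedron has $1$-skeleton exactly $\Gamma$, so $e$ is a genuine edge with dihedral angle in $(0,\pi)$, and the theorem of Montcouquiol and Weiss (\cite{mont}, \cite{weiss}) states precisely that $\Psi|_{\mathcal{A}_\Gamma}\colon\mathcal{A}_\Gamma\to\R^n$ is a local diffeomorphism (in particular $\dim\mathcal{A}_\Gamma^e=n$). So all the content is at the boundary points $P\in\partial\mathcal{A}_\Gamma^e=\mathcal{A}_{\Gamma'}$.

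For such $P$ the two faces meeting at $e$ coincide, so the angle at $e$ equals $\pi$ and $\Psi$ sends all of $\mathcal{A}_{\Gamma'}$ into $\R^{n-1}\times\{\pi\}$; under the identification above, $\Psi|_{\mathcal{A}_{\Gamma'}}$ is exactly the dihedral-angle map of polyhedra with $1$-skeleton $\Gamma'$ (which has $n-1$ edges), hence a local diffeomorphism onto $\R^{n-1}\times\{\pi\}$ again by \cite{mont}, \cite{weiss}. Therefore $d\Psi_P$ maps $T_P(\partial\mathcal{A}_\Gamma^e)$ isomorphically onto $\R^{n-1}\times\{0\}$. To promote this to an isomorphism of the full tangent space $T_P\mathcal{A}_\Gamma^e$, I would use Lemma \ref{lem:mainlem}: it gives a family $P_t$, $t\in[0,\epsilon)$, with $P_0=P$, with $P_t\in\mathcal{A}_\Gamma$ for $t>0$, and with angle at $e$ equal to a smooth $f(t)$, $f'(0)\neq0$. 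The velocity $\dot P_0\in T_P\mathcal{A}_\Gamma^e$ is inward-pointing and $d\Psi_P(\dot P_0)$ has last coordinate $f'(0)\neq 0$, so it is not contained in $\R^{n-1}\times\{0\}$; combined with the previous sentence, $d\Psi_P$ is surjective and hence a linear isomorphism, which moreover sends inward vectors to the side $\{x_e<\pi\}$ (since $f(t)<\pi$ for $t>0$ we get $f'(0)<0$).

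Finally I would assemble the pieces: $\Psi$ maps $\mathcal{A}_\Gamma^e$ into the half-space $\{x_e\le\pi\}$, taking $\mathcal{A}_\Gamma$ into its interior and $\mathcal{A}_{\Gamma'}$ into its boundary, and at every $P\in\partial\mathcal{A}_\Gamma^e$ the differential $d\Psi_P$ is a linear isomorphism carrying inward vectors to inward vectors; by the inverse function theorem for manifolds with boundary, $\Psi$ restricts to a diffeomorphism from a neighborhood of $P$ in $\mathcal{A}_\Gamma^e$ onto a neighborhood of $\Psi(P)$ in $\{x_e\le\pi\}$. Together with the interior case this proves the statement. The only real obstacle is the boundary, and it is exactly what Lemma \ref{lem:mainlem} is designed to overcome: without the non-degeneracy $f'(0)\neq0$ one would only know $d\Psi_P$ to be a limit of isomorphisms, which does not rule out a fold at the boundary.
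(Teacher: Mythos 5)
Your proposal is correct and follows essentially the same route as the paper: handle the interior via Montcouquiol--Weiss, and at a boundary point combine the invertibility of the angle map for $\Gamma'$ on $T_P\mathcal{A}_{\Gamma'}$ with the transverse vector from Lemma \ref{lem:mainlem} whose image has nonzero $e$-component (this is exactly the paper's block-triangular matrix with $a=f'(0)\neq 0$ and $d_P\Psi'$ invertible). Your explicit appeal to the inverse function theorem for manifolds with boundary and the check that inward vectors map to $\{x_e<\pi\}$ is a slightly more careful write-up of the same argument, not a different one.
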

\begin{proof}
 Consider a polyhedron $P\in\mathcal{A}_\Gamma^e$. If $P$ is in the interior of $\mathcal{A}_\Gamma^e$, then $\Psi$ is a local diffeomorphism thanks to \cite[Theorem 19]{mont}. Consider then $P\in\partial\mathcal{A}_\Gamma^e\cong \mathcal{A}_{\Gamma'}$ where $\Gamma'$ is obtained from $\Gamma$ by removing the edge $e$. Then by Corollary \ref{cor:manifold} we can choose $v_2,\dots,v_{n}\in T_P\mathcal{A}_{\Gamma'}$ and a vector $v_1\in T_P\mathcal{A}^e_\Gamma$ which is tangent to a family $P_t$ whose existence is guaranteed by Lemma \ref{lem:mainlem}. In this basis, 
 \begin{displaymath}
  d_P\Psi=\left(\begin{matrix}
 a & \begin{matrix} 0 & 0 & \dots & 0 \end{matrix} \\
 \begin{matrix} * \\ * \\ \vdots \\[1ex] * \end{matrix} & \text{\Large$d_P\Psi'$}
\end{matrix}\right)
 \end{displaymath}
where $\Psi'$ is the dihedral angle map of $\mathcal{A}_{\Gamma'}$ and $a<0$. Because $d_P\Psi'$ is invertible (by \cite[Proposition 17]{mont}) and $a\neq 0$ (by Lemma \ref{lem:mainlem}) this matrix is invertible, hence $\Psi$ is a local diffeomorphism at $P$.
\end{proof}

\section{The volume conjecture for hyperbolic polyhedra}\label{sec:quantum}

In this section we briefly talk about the Yokota invariant and the Volume Conjecture for polyhedra. More details about these topics can be found in \cite{maxvolconj}.

Take a graph $\Gamma$ contained in a manifold $M$, an odd integer $r>2$, a primitive $r$-th root of unity $q$ and a coloring $col$ of the edges of $\Gamma$ with natural even numbers less than $r-2$; then the Yokota invariant $Y_r(M,\Gamma,col,q)$ is defined in \cite{yok} as a direct generalization of the Kauffman skein bracket of a trivalent graph. 

The simplest case to which these invariants apply is that of a planar graph in $S^3$; in this case, if the graph is $3$-connected (that is to say, it cannot be disconnected by removing $2$ edges and has more than $3$ vertices) then it is the $1$-skeleton of a hyperbolic polyhedron. The Volume Conjecture for polyhedra relates the growth of the invariants $Y_r\left(S^3,\Gamma,col,e^{2\pi i/r}\right)$ (as $r\ra\infty$) to the volume of a hyperbolic polyhedron with $1$-skeleton $\Gamma$ and dihedral angles determined by $col$ in a certain way.

A conjecture relating the growth of the quantum $6j$-symbol to the volume of hyperbolic tetrahedra first appeared in \cite{C}. Later, various generalizations to the case of trivalent graphs and simple polyhedra were proposed in \cite{volconjpoly} and \cite{murkolp}; the following version of the conjecture first appeared in \cite{maxvolconj} as a direct generalization of these conjectures to non-simple polyhedra.

 \begin{cnj}[The volume conjecture for polyhedra]\label{cnj:volconj}
Let $P$ be a proper polyhedron with dihedral angles
  $\alpha_1,\dots,\alpha_m$ at the edges $e_1,\dots,e_m$, and $1$-skeleton $\Gamma$. Let $col_r$ be a sequence of $r$-admissible colorings of the edges $e_1,\dots,e_m$ of $\Gamma$ such that 
  \begin{displaymath}\label{cnd:angles}
   2\pi\lim_{r\ra+\infty}\frac{col_r(e_i)}{r}=\pi-\alpha_i.
  \end{displaymath}
Then
\begin{displaymath}
 \lim_{r\ra+\infty}\frac{\pi}{r}\log\left\lvert Y_r(S^3,\Gamma,col_r,e^{2\pi i/r})\right\rvert=\mathrm{Vol}(P).
\end{displaymath}

 \end{cnj}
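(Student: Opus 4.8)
The plan is to reduce the Yokota invariant to an explicit finite state sum and then extract its exponential growth rate by a saddle-point (Laplace) analysis whose leading critical value turns out to be the hyperbolic volume. First I would use the shadow-state-sum description of $Y_r(S^3,\Gamma,col_r,e^{2\pi i/r})$: choosing a triangulation of one of the two disks that $\Gamma$ bounds in $S^2$, the invariant becomes a sum over admissible colorings of the internal edges of this triangulation, with the boundary edges pinned to the fixed data $col_r$ and each tetrahedron contributing a quantum $6j$-symbol, together with theta-symbol normalizations at the faces. Since the external colors are fixed by the angle condition $2\pi\, col_r(e_i)/r\ra\pi-\alpha_i$, only the internal colors are summed over, so the whole quantity is a finite sum indexed by the lattice points of a polytope of admissible internal colorings.

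Next I would feed in the asymptotic expansion of a single quantum $6j$-symbol at the root of unity $q=e^{2\pi i/r}$. The key input is that $\tfrac{2\pi}{r}\log\lvert 6j\rvert$ converges to the volume of the (possibly truncated) hyperbolic tetrahedron whose dihedral angles are read off from the colors via $\theta_e=\pi-2\pi\,col(e)/r$, with a subexponential prefactor governed by the Hessian of a Lobachevsky-type potential. Treating $2\pi\,col/r$ as a continuous variable, each term of the state sum then takes the form $\exp\!\big(\tfrac{r}{2\pi}\,\Phi(\text{internal angles})+o(r)\big)$, where $\Phi$ is a sum of tetrahedral volume functions over the triangulation; under the identification $\theta=\pi-2\pi\,col/r$ the external variables equal the prescribed dihedral angles $\alpha_i$.

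Then I would apply the saddle-point method to this sum. Passing from the sum to an integral over the internal-angle variables, the exponential rate of the total is $\max\operatorname{Re}\Phi$ over the admissible region, attained at the critical points of $\Phi$. The critical-point equations for a sum of tetrahedral volumes are exactly the gluing (Schl\"afli) conditions forcing the tetrahedra to assemble into a genuine hyperbolic polyhedron with $1$-skeleton $\Gamma$ and the prescribed dihedral angles $\alpha_i$; by additivity of volume under gluing, the critical value equals $\vol(P)$. In this way the expected answer $\vol(P)$ appears precisely as the contribution of the geometric critical point, and the remaining task is to show that this critical point governs the growth of the sum.

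The hard part will be the rigorous saddle-point estimate, and it is exactly here that the conjecture remains open in general. One must (i) control the error terms in the $6j$-asymptotics \emph{uniformly} across the summation polytope, including near its boundary and near degenerate tetrahedra where the dilogarithmic expansion breaks down; (ii) prove that the geometric critical point is the \emph{global} maximizer of $\operatorname{Re}\Phi$, so that no other (possibly non-geometric) saddle produces a strictly larger rate; and (iii) rule out destructive interference that could lower the actual growth rate below $\operatorname{Re}\Phi$ at the dominant saddle. Points (ii) and (iii)—selecting the correct branch of the multivalued volume/dilogarithm potential and bounding the oscillatory cancellation of a sum of complex terms—are the genuine obstacles; they are what confines existing proofs to special families (simple or hyperideal polyhedra, or low-complexity cases), and a uniform treatment for an arbitrary polyhedral graph $\Gamma$ is not presently available.
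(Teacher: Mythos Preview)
The statement you were asked about is a \emph{conjecture}, not a theorem, and the paper does not prove it. Conjecture~\ref{cnj:volconj} is stated as an open problem and then used only as a \emph{hypothesis}: the paper's contribution is the implication ``Volume Conjecture for polyhedra $\Rightarrow$ weak Stoker conjecture $\Rightarrow$ Stoker conjecture'' (Proposition after Conjecture~\ref{cnj:volconj} and Corollary~\ref{cor:equiv}). There is therefore no ``paper's own proof'' to compare your proposal against.

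What you have written is a heuristic outline of the standard saddle-point strategy for attacking volume conjectures of this type, and you correctly flag in your final paragraph precisely why it is not a proof: uniform control of the $6j$-asymptotics across the summation polytope, identification of the geometric critical point as the global maximizer of $\operatorname{Re}\Phi$, and exclusion of destructive cancellation are all genuinely open for a general polyhedral graph $\Gamma$. So your proposal does not contain a gap so much as it \emph{is} the gap: the three obstacles you list are exactly what keeps Conjecture~\ref{cnj:volconj} a conjecture, and the paper makes no claim to resolve them.
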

 
 \begin{oss}
  Conjecture \ref{cnj:volconj} is stated for proper generalized hyperbolic polyhedra, that is to say, polyhedra with possibly hyperideal vertices and whose truncations are all right-angled; for the terminology see \cite{maxvol}. In the present paper we are only concerned with hyperbolic polyhedra as in Definition \ref{dfn:poly}, i.e. compact polyhedra. Therefore, it is safe to mentally substitute ``proper'' for ``hyperbolic'' in the statement of Conjecture \ref{cnj:volconj} (and any subsequent mention of the term).
 \end{oss}

 Conjecture \ref{cnj:volconj} holds (see \cite{chenmur}) for any polyhedron obtained by gluing tetrahedra with at least one hyperideal vertex along some of their truncation faces (in particular, some of these polyhedra are compact but all of them are simple). It was also proved for another, larger family of non-simple polyhedra in \cite{maxvolconj} (albeit for a single sequence of colors each). Further numerical evidence for it appears in the appendix of \cite{maxvolconj}.
 
 \begin{prop}
  If Conjecture \ref{cnj:volconj} is true, then the weak Stoker Conjecture is true.
 \end{prop}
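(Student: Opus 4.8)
The plan is to use the fact that the Yokota invariant $Y_r(S^3,\Gamma,col,e^{2\pi i/r})$ depends only on the colored graph $(\Gamma,col)$, and not on any particular hyperbolic polyhedron realizing it. So, given two polyhedra $P_1$ and $P_2$ with the same $1$-skeleton $\Gamma$ and the same dihedral angle $\alpha_i$ at each corresponding edge $e_i$, I would produce a single sequence of admissible colorings $col_r$ satisfying the hypothesis of Conjecture \ref{cnj:volconj} simultaneously for $P_1$ and for $P_2$, and then read off $\vol(P_1)=\vol(P_2)$ by comparing the two conjectured asymptotic formulas, which must then have the same left-hand side.

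First I would construct the coloring sequence. Set $\tau_i:=(\pi-\alpha_i)/(2\pi)$; since $P_1$ is compact, $\alpha_i\in(0,\pi)$ and hence $\tau_i\in(0,\tfrac12)$. Taking $col_r(e_i)$ to be an even integer within distance $2$ of $r\tau_i$ makes the normalization $2\pi\lim_{r\ra+\infty}col_r(e_i)/r=\pi-\alpha_i$ automatic, so the only thing left to check is that $col_r$ is $r$-admissible once $r$ is large. The $r$-admissibility conditions amount, at each vertex of $\Gamma$, to a finite system of linear inequalities in the colors of the incident edges; after dividing by $r$ and letting $r\ra+\infty$ these become linear inequalities in the $\tau_i$. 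The dihedral angles of a genuine hyperbolic polyhedron satisfy the corresponding \emph{strict} inequalities — at a trivalent vertex this is just the statement that the $\alpha_i$ are the angles of a nondegenerate spherical triangle, namely the link of the vertex, with the analogous fact at higher valence — and this is exactly the geometric input already built into the formulation of Conjecture \ref{cnj:volconj}; see \cite{maxvolconj}. Thus the point $(\tau_i)$ lies in the interior of the limiting admissible region, the slack there is of order $r$, and it dominates the $O(1)$ error committed by rounding to even integers, so $col_r$ is $r$-admissible for all sufficiently large odd $r$.

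With $col_r$ in hand I would apply Conjecture \ref{cnj:volconj} twice. For $P_1$ it gives
\[
 \lim_{r\ra+\infty}\frac{\pi}{r}\log\left\lvert Y_r(S^3,\Gamma,col_r,e^{2\pi i/r})\right\rvert=\vol(P_1),
\]
and for $P_2$, which has the same $1$-skeleton $\Gamma$ and the same dihedral angles and so admits the very same sequence $col_r$, it yields the identical left-hand side equal to $\vol(P_2)$; here one uses crucially that $Y_r(S^3,\Gamma,col_r,e^{2\pi i/r})$ is a number attached to $(\Gamma,col_r)$ alone. Comparing the two identities gives $\vol(P_1)=\vol(P_2)$, which is the weak Stoker Conjecture.

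The only genuinely non-formal step is the construction of the admissible sequence: one must make sure that rounding the real numbers $r\tau_i$ to legal (even, vertex-admissible) integers cannot violate an admissibility inequality for large $r$. As explained, this is precisely the strictness of the linear inequalities satisfied by the dihedral angles of a hyperbolic polyhedron, which is part of the setup of the volume conjecture; everything else follows formally from the independence of the Yokota invariant from the polyhedron.
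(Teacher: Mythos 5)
Your proposal is correct and follows essentially the same route as the paper: pick a sequence of admissible colorings realizing the common angles, apply Conjecture \ref{cnj:volconj} to both polyhedra, and use the fact that the Yokota invariant depends only on $(\Gamma,col_r)$ to equate the volumes. The paper simply asserts the existence of such a coloring sequence, whereas you spell out the rounding and admissibility check; the substance is identical.
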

 \begin{proof}
  If $P_1$ and $P_2$ have the same $1$-skeleton $\Gamma$ and the same dihedral angles $\theta_1,\dots,\theta_n$, then we can choose a sequence of colorings $col_r$ satisfying condition \ref{cnd:angles}. Then the Volume Conjecture implies that
  \begin{displaymath}
 \lim_{r\ra+\infty}\frac{\pi}{r}\log\left\lvert Y_r(\Gamma,col_r)\right\rvert=\mathrm{Vol}(P_1)=\mathrm{Vol}(P_2).
\end{displaymath}
 \end{proof}

\section{The weak Stoker conjecture implies the Stoker conjecture}\label{sec:main}

We now proceed to show that the weak version of the Stoker conjecture implies the strong version. The main tool involved, other than Theorem \ref{prop:deform}, is the Schl\"afli identity.

\begin{teo}\label{teo:schlafli}[\cite[Chapter: The Schl\"afli differential equality]{miln}]
 If $P_t$ is a smooth family of hyperbolic polyhedra with the same $1$-skeleton, dihedral angles $\theta_1^t,\dots,\theta_n^t$ and edge lengths $l_1^t,\dots,l_n^t$, then
 \begin{displaymath}
  \frac{\partial\vol(P_t)}{\partial t}_{\big\rvert_{t=t_0}}=-\frac{1}{2}\sum_i l_i^{t_0}\frac{\partial\theta^t_i}{\partial t}_{\big\rvert_{t=t_0}}
 \end{displaymath}
\end{teo}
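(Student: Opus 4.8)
The plan is to reduce the statement to the case of a single tetrahedron and to take the tetrahedral Schl\"afli identity as the base case. First I would fix, for each $t$, a triangulation $\mathcal T_t$ of $P_t$ into nondegenerate hyperbolic tetrahedra that depends smoothly on $t$. This is possible because each $P_t$ is convex (a compact intersection of half-spaces) and the family has a fixed combinatorial type: pick a vertex $v_0$ of $P_t$ and cone $P_t$ from it, i.e. for every facet $F$ not containing $v_0$ take the pyramid over $F$ with apex $v_0$, and triangulate that pyramid by fan-triangulating $F$ from one of its vertices and coning. Since this recipe is purely combinatorial it makes sense simultaneously for all $t$, the resulting tetrahedra vary smoothly, and they are nondegenerate because no vertex of a convex $3$-polytope lies in the affine plane of a facet not containing it. The edges of $\mathcal T_t$ come in exactly three kinds: (i) the edges $e_1,\dots,e_n$ of $P_t$; (ii) diagonals contained in the interior of a facet of $P_t$; (iii) diagonals contained in the interior of $P_t$.

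Granting the tetrahedral case $\frac{d}{dt}\vol(T_t)=-\tfrac12\sum_{e\subset T_t}l_e\,\frac{d\theta_{T,e}}{dt}$ (sum over the six edges of $T_t$), I would write $\vol(P_t)=\sum_j\vol(T_j^t)$, differentiate term by term — which is legitimate since $\vol$ is a smooth function on the open set of nondegenerate tetrahedra — and regroup the sum by edges of $\mathcal T_t$:
\begin{displaymath}
 \frac{\partial\vol(P_t)}{\partial t}=-\frac12\sum_{e\in\mathcal T_t}l_e\,\frac{\partial}{\partial t}\Big(\sum_{T\ni e}\theta_{T,e}\Big).
\end{displaymath}
For a fixed edge $e$ the tetrahedra containing $e$ tile a neighborhood of its relative interior, so $\sum_{T\ni e}\theta_{T,e}$ is the total angle of $P_t$ around $e$: it equals $2\pi$ when $e$ is of type (iii), $\pi$ when $e$ is of type (ii), and the dihedral angle $\theta_i^t$ of $P_t$ when $e=e_i$ is of type (i). Hence the type-(ii) and type-(iii) terms differentiate to $0$, and what remains is exactly $-\tfrac12\sum_i l_i^{\,t}\,\frac{\partial\theta_i^t}{\partial t}$, which is the claim.

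The genuinely hard input is the tetrahedral identity, where I would either quote Milnor's computation verbatim or argue as follows. Realize $T=\{x\in\h^3:\langle x,u_i\rangle\le 0\}$ in the hyperboloid model with outward unit normals $u_1,\dots,u_4$. The first variation of volume under a motion of the boundary is $\frac{d}{dt}\vol(T)=\sum_i\int_{F_i}\nu_i\,dA$, where $\nu_i$ is the outward normal component of the boundary velocity on the facet $F_i$; one computes $\nu_i$ explicitly in terms of $\dot u_i$, observes it is an affine-type function on the hyperbolic plane containing $F_i$, and evaluates each integral over the hyperbolic triangle $F_i$ using the two-dimensional Schl\"afli identity, i.e. the fact that $\mathrm{Area}(F_i)$ equals $\pi$ minus the sum of the interior angles of $F_i$. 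This dimension-$n$-to-dimension-$(n-2)$ reduction, together with tracking the normalizing constant $\tfrac1{n-1}$ that specializes to $\tfrac12$ when $n=3$, is the delicate step; by comparison the triangulation and resummation above are routine bookkeeping, the one essential observation being that edges interior to $P_t$ (or to a facet) contribute nothing because the total angle around them is constant.
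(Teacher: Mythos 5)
The paper offers no proof of this statement at all: it is quoted directly from Milnor's chapter on the Schl\"afli differential equality, and the citation is the entire argument. Your proposal instead unpacks the standard proof, and it is correct in outline. The reduction is sound: the coning/fan triangulation is determined by purely combinatorial choices, so it varies smoothly with $t$ and its edges fall into your three types with $t$-independent classification; the regrouping $\sum_T\sum_{e\subset T} l_e\dot\theta_{T,e}=\sum_e l_e\frac{d}{dt}\bigl(\sum_{T\ni e}\theta_{T,e}\bigr)$ is legitimate because $l_e$ is common to all tetrahedra containing $e$; and the key observation --- that the total angle around an edge interior to $P_t$ (resp.\ to a facet) is constantly $2\pi$ (resp.\ $\pi$), so only the genuine edges of $P_t$ survive differentiation --- is exactly right. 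This is in fact how the cited reference itself passes from simplices to general polytopes (additivity under subdivision), so you are essentially reconstructing the intended proof rather than finding a new one; what your version buys is making explicit that the theorem for polyhedra with a fixed $1$-skeleton follows formally from the tetrahedral case, which is the only genuinely analytic input. That tetrahedral case remains a black box in your write-up: the first-variation formula $\frac{d}{dt}\vol(T)=\sum_i\int_{F_i}\nu_i\,dA$ and the evaluation of each facet integral via the two-dimensional angle-defect formula are only sketched, but you flag this honestly and defer to Milnor for it, which puts you on the same footing as the paper. If you wanted the argument fully self-contained, that computation (the normal velocity $\nu_i$ as a linear function in the hyperboloid model and the dimension-reduction identity with its factor $\tfrac{1}{n-1}$) is the one step you would still have to write out.
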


\begin{prop}\label{prop:edgelengths}
 If Conjecture \ref{cnj:wstok} is true, then two polyhedra with the same $1$-skeleton and the same dihedral angles have the same edge lengths.
\end{prop}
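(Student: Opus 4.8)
The plan is to combine the Schl\"afli identity (Theorem \ref{teo:schlafli}) with the fact that, by \cite[Theorem 19]{mont} (the same input used in Theorem \ref{prop:deform}), the dihedral angle map $\Psi\colon\mathcal{A}_\Gamma\ra\R^n$ is a local diffeomorphism, where $n$ is the number of edges of $\Gamma$. The guiding idea is this: in the local coordinates on $\mathcal{A}_\Gamma$ furnished by the dihedral angles, the Schl\"afli identity says that $-2\vol$ is a \emph{potential function} whose partial derivative in the $i$-th angle direction is the length of the $i$-th edge. Two polyhedra with the same angles $\bar\theta$ a priori give two local branches of this potential near $\bar\theta$; the Weak Stoker Conjecture asserts that the volume depends only on the angles, i.e. that the potential is single-valued near $\bar\theta$. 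Hence the two branches agree, so do their gradients, and therefore so do the two tuples of edge lengths.

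In detail, I would argue as follows. Let $P_1,P_2\in\mathcal{A}_\Gamma$ have the same dihedral angle tuple $\bar\theta=(\bar\theta_1,\dots,\bar\theta_n)$. Choose open neighborhoods $U_j\ni P_j$ in $\mathcal{A}_\Gamma$ on which $\Psi$ restricts to a diffeomorphism onto an open subset of $\R^n$, and set $W=\Psi(U_1)\cap\Psi(U_2)$, an open neighborhood of $\bar\theta=\Psi(P_1)=\Psi(P_2)$. Define smooth maps $V_j\colon W\ra\R$ by $V_j=\vol\circ(\Psi|_{U_j})^{-1}$. First I would compute the gradient of $V_j$ at $\bar\theta$: for any $w\in\R^n$ take a smooth path $\theta(t)$ in $W$ with $\theta(0)=\bar\theta$ and $\dot\theta(0)=w$; then $P_j(t)=(\Psi|_{U_j})^{-1}(\theta(t))$ is a smooth family in $\mathcal{A}_\Gamma$ with $P_j(0)=P_j$, dihedral angles $\theta_i(t)$ and edge lengths $l_i(t)$, and Theorem \ref{teo:schlafli} gives $\tfrac{d}{dt}V_j(\theta(t))|_{t=0}=-\tfrac12\sum_i l_i(P_j)\,w_i$. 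Since $w$ is arbitrary, $\partial_{\theta_i}V_j(\bar\theta)=-\tfrac12\,l_i(P_j)$. Next I would invoke Conjecture \ref{cnj:wstok}: for every $\theta\in W$ the polyhedra $(\Psi|_{U_1})^{-1}(\theta)$ and $(\Psi|_{U_2})^{-1}(\theta)$ both have $1$-skeleton $\Gamma$ and dihedral angles $\theta$, so the Weak Stoker Conjecture forces $V_1(\theta)=V_2(\theta)$. Thus $V_1\equiv V_2$ on $W$, hence $\nabla V_1(\bar\theta)=\nabla V_2(\bar\theta)$, which by the formula above is exactly the statement that $l_i(P_1)=l_i(P_2)$ for every edge $e_i$.

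There is no serious obstacle once these pieces are assembled; the only points requiring care are bookkeeping. One must note that the local inverses $(\Psi|_{U_j})^{-1}$ take values in $\mathcal{A}_\Gamma$, so that both the Schl\"afli identity (which needs a smooth family with fixed $1$-skeleton) and the Weak Stoker Conjecture (which compares polyhedra with $1$-skeleton $\Gamma$) legitimately apply to the families $P_j(t)$ and to the fibres of $\Psi$ over $W$; and that the notion of ``edge length'' is unambiguous here precisely because we work in $\mathcal{A}_\Gamma$ (no face degenerates), not on a boundary stratum. Finally one should remember that edge lengths are isometry invariants, so passing to the quotient $\mathcal{A}_\Gamma=\Pg/\mathrm{Iso}(\h^3)$ costs nothing. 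I expect the trickiest expository choice, rather than a genuine difficulty, to be phrasing the passage from ``the two potentials coincide'' to ``the edge lengths coincide'' cleanly, i.e. making explicit that the Schl\"afli identity is being read as computing $\partial_{\theta_i}\vol$ in angle coordinates.
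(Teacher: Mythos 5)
Your proposal is correct and is essentially the paper's own argument: both use the fact that the dihedral angle map is a local diffeomorphism to parametrize nearby polyhedra by their angles, invoke the Weak Stoker Conjecture to identify the two resulting volume functions of the angles, and then apply the Schl\"afli identity to read off each edge length as (minus twice) a partial derivative of that common function. The only cosmetic difference is that the paper varies one angle at a time along a path $t\mapsto t\theta_e$, whereas you compare the full gradients at once.
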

\begin{proof}
 Consider $P_1$ and $P_2$ with the same $1$-skeleton $\Gamma$ and the same dihedral angles $\{\theta_e\}_{e \textrm{ edges of }\Gamma}$; fix an edge $e\subseteq \Gamma$. By Theorem \ref{thm:main}, for $t$ close to $1$ there are two families of polyhedra with $1$-skeleton $\Gamma$, denoted with $P_1^t$ and $P_2^t$, such that:
 \begin{itemize}
  \item $P_1^1=P_1$ and $P_2^1=P_2$;
  \item the dihedral angles of both $P_1^t$ and $P_2^t$ at $e$ are equal to $t\theta_e$.
  \item the dihedral angles of both $P_1^t$ and $P_2^t$ at an edge $e'\neq e$ are equal to $\theta_{e'}$.
 \end{itemize}
Because of the assumption of the validity of Conjecture \ref{cnj:wstok}, $\vol(P_1^t)=\vol(P_2^t)$ for all $t$ in a neighborhood of $1$; therefore by the Schl\"afli identity, $l_e^1=\frac{\partial\vol(P_1^t)}{\partial\theta_e}|_{t=1}=\frac{\partial\vol(P_2^t)}{\partial\theta_e}|_{t=1}=l_e^2$.
\end{proof}

\begin{prop}\label{prop:diaglengths}
  If Conjecture \ref{cnj:wstok} is true, then two polyhedra with the same $1$-skeleton and the same dihedral angles have congruent faces.
\end{prop}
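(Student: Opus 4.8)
The plan is to reduce the congruence of faces to the congruence of their edge lengths together with some angular data, and then to extract that angular data from Proposition \ref{prop:edgelengths} by adding diagonals as auxiliary edges. Concretely, let $P_1$ and $P_2$ have the same $1$-skeleton $\Gamma$ and the same dihedral angles, and fix a face $f$ of $\Gamma$. By Proposition \ref{prop:edgelengths} the corresponding edges of $P_1$ and $P_2$ have equal lengths, so the two realizations of $f$ are hyperbolic polygons with the same cyclic sequence of side lengths. A polygon is not determined by its side lengths alone, so the missing information is the set of interior angles of $f$; I will recover these.

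First I would triangulate the face $f$ by choosing a maximal family of non-crossing diagonals $e_1,\dots,e_k$ of $f$. For each diagonal $e_j$, consider the graph $\Gamma_j=\Gamma\cup e_j$: by hypothesis $P_i$ has weak $1$-skeleton $(\Gamma_j,e_j)$, and by Theorem \ref{prop:deform} (applied to $\mathcal{A}_{\Gamma_j}^{e_j}$, whose boundary is $\mathcal{A}_\Gamma$) dihedral angles are local coordinates, so $P_i$ can be deformed within $\mathcal{P}_{\Gamma_j}$ so that $e_j$ becomes a genuine edge, keeping all old dihedral angles fixed and letting the angle at $e_j$ vary along a curve with nonvanishing derivative. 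Running Proposition \ref{prop:edgelengths} for the $1$-skeleton $\Gamma_j$ (whose validity is licensed by Conjecture \ref{cnj:wstok} exactly as in that proposition, since the weak Stoker conjecture is assumed for all polyhedral graphs), I get that the length of the diagonal $e_j$ is the same in $P_1$ and in $P_2$: indeed $\operatorname{Vol}(P_1^t)=\operatorname{Vol}(P_2^t)$ along the deformation forces, via Schl\"afli, $l_{e_j}^{P_1}=\partial_t\operatorname{Vol}(P_1^t)=\partial_t\operatorname{Vol}(P_2^t)=l_{e_j}^{P_2}$.

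Now in each of $P_1$ and $P_2$ the face $f$ is cut by the same family of diagonals into the same collection of hyperbolic triangles, and each such triangle has, in $P_1$ and in $P_2$, the same three side lengths (each side being either an edge of $\Gamma$ or one of the diagonals, all of which now have matching lengths). Since a hyperbolic triangle is determined up to isometry by its three side lengths, the corresponding triangles are congruent; gluing them back along the diagonals in the common combinatorial pattern shows that the two realizations of $f$ are congruent hyperbolic polygons. As $f$ was arbitrary, all corresponding faces of $P_1$ and $P_2$ are congruent.

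The main obstacle is the bookkeeping in the second step: I must be sure that the deformation supplied by Lemma \ref{lem:mainlem} and Theorem \ref{prop:deform} keeps $P_i$ inside $\mathcal{P}_{\Gamma_j}$ (so that it is an honest polyhedron with an honest edge $e_j$), that the other diagonals one eventually wants to use remain diagonals of a non-triangular face throughout (one can simply handle the diagonals one at a time, re-triangulating as needed, or choose them so that the hypothesis of Theorem \ref{prop:deform} — adding an edge to a non-triangular face — is met at each stage), and that the Schl\"afli argument of Proposition \ref{prop:edgelengths} applies verbatim with $\Gamma_j$ in place of $\Gamma$. None of these is deep, but the statement "two polyhedra with the same $1$-skeleton and dihedral angles have the same diagonal lengths" needs to be phrased carefully, and it is essentially equivalent to an intermediate lemma that should probably be stated separately; modulo that, the congruence of faces follows from the SSS congruence criterion for hyperbolic triangles.
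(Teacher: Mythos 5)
Your proof is correct and follows essentially the same route as the paper: add a diagonal $\tilde e$ to the face, view $P_1,P_2$ as boundary points of $\mathcal{P}_{\Gamma\cup\tilde e}^{\tilde e}$, deform the angle at $\tilde e$ away from $\pi$ using Theorem \ref{prop:deform}, and read off the diagonal's length from the Schl\"afli identity as in Proposition \ref{prop:edgelengths}. The only (welcome) difference is that you make the last step explicit — triangulating the face and invoking side-side-side congruence for hyperbolic triangles — where the paper simply asserts that equal edge and diagonal lengths force congruent faces.
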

\begin{proof}
 Consider two polyhedra $P_1$ and $P_2$ with the same $1$-skeleton $\Gamma$ and the same dihedral angles $\{\theta_e\}_{e \textrm{ edges of }\Gamma}$. Fix faces $F_1$ and $F_2$ (of $P_1$ and $P_2$ respectively) corresponding to the same face $F$ of $\Gamma$. If $F$ is triangular, then $F_1$ and $F_2$ have the same edge lengths by Proposition \ref{prop:edgelengths}, and therefore they are congruent. Suppose that $F$ is not triangular: we show that $F_1$ and $F_2$ are congruent by showing that their diagonals have the same length. Choose two non-adjacent vertices $v_1$ and $v_2$ of $F$, and add the edge $\tilde{e}=\overline{v_1v_2}$ to $\Gamma$; we call the new graph $\Gamma'$. Since $P_1$ and $P_2$ are both elements of $\mathcal{P}_\Gamma=\partial\mathcal{P}_{\Gamma'}^{\tilde{e}}$, we can think of them as polyhedra with weak $1$-skeleton $(\Gamma',\tilde{e})$; in particular their dihedral angle at $\tilde{e}$ is equal to $\pi$.
 
 By Theorem \ref{thm:main}, for $t$ in some interval $(1-\epsilon, 1]$ there are two families of polyhedra with $1$-skeleton $\Gamma'$, denoted with $P_1^t$ and $P_2^t$, such that:
 \begin{itemize}
  \item $P_1^1=P_1$ and $P_2^1=P_2$;
  \item the dihedral angles of both $P_1^t$ and $P_2^t$ at $\tilde{e}$ are equal to $t\pi$.
  \item the dihedral angles of both $P_1^t$ and $P_2^t$ at an edge $e\neq \tilde{e}$ are equal to $\theta_{e'}$.
 \end{itemize}
 
 Therefore, by the same argument of Proposition \ref{prop:edgelengths}, the length of $\tilde{e}$ is the same in $P_1$ and in $P_2$. This shows that every diagonal in $P_1$ has the same length of the corresponding diagonal in $P_2$, which implies that each face of $P_1$ is congruent to the corresponding face of $P_2$.
\end{proof}

\begin{cor}\label{cor:equiv}
 If Conjecture \ref{cnj:wstok} is true, then the Stoker conjecture is true.
\end{cor}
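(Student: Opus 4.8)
The plan is to assemble Corollary~\ref{cor:equiv} from the two preceding propositions together with a standard rigidity fact: a convex polyhedron in $\h^3$ is determined up to isometry by the isometry types of its faces and the way they are glued along edges. Concretely, suppose Conjecture~\ref{cnj:wstok} holds and let $P_1,P_2$ be two polyhedra with the same $1$-skeleton $\Gamma$ and the same dihedral angle at each edge. By Proposition~\ref{prop:diaglengths} each face $F_1$ of $P_1$ is congruent to the corresponding face $F_2$ of $P_2$ (via Proposition~\ref{prop:edgelengths} in the triangular case and the diagonal-length argument otherwise); in particular corresponding edges of $P_1$ and $P_2$ have equal length. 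So $P_1$ and $P_2$ have the same combinatorial type, the same edge lengths, the same dihedral angles, and the same face angles.

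Next I would build an explicit isometry. Pick a flag of $P_1$ --- a vertex $v$, an edge $e\ni v$, and a face $F\ni e$ --- and the corresponding flag of $P_2$. Since $\mathrm{Iso}(\h^3)$ acts transitively on flags (a vertex, then a half-geodesic from it, then a supporting half-plane), there is a unique $g\in\mathrm{Iso}(\h^3)$ carrying the $P_1$-flag to the $P_2$-flag. I claim $g(P_1)=P_2$. Propagate the identification outward from the initial face: the face $F$ of $P_1$ maps isometrically onto the corresponding face of $P_2$ because the two faces are congruent and their initial flags agree, hence all vertices of $F$ are matched. Then each neighbouring face is determined: it shares an edge $e'$ with $F$, the dihedral angle along $e'$ is the same in $P_1$ and $P_2$, and the adjacent face is congruent in the two polyhedra, so $g$ sends it correctly as well; the position of a face is pinned down by one of its edges and the dihedral angle at that edge. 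Since $\Gamma$ is connected, continuing this face-by-face propagation matches every vertex, edge and face of $P_1$ with the corresponding one of $P_2$ under $g$. As a polyhedron is the convex hull of its vertices, $g(P_1)=P_2$, so $P_1$ and $P_2$ are isometric, which is precisely the Stoker conjecture.

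The one point requiring a little care --- and the place I expect the only real friction --- is the consistency of the face-by-face propagation: a priori, going around a vertex or around the boundary of a face could produce a monodromy obstruction. But there is none, because the data being propagated (vertex positions in $\h^3$) already exist globally in $P_2$: at each step we are merely checking that the isometrically transported piece of $P_1$ coincides with the corresponding piece of the \emph{already given} polyhedron $P_2$, and this is forced at every edge by the equality of the adjacent face (an isometry of $\h^3$ is determined by its action on a half-plane and the choice of side) together with the equality of the dihedral angle there. Thus the induction on faces closes with no global obstruction. One should also note $\Gamma$ is connected (true since it is $3$-connected), so the propagation reaches every face. This completes the proof.

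\begin{proof}
Assume Conjecture~\ref{cnj:wstok} holds and let $P_1,P_2\in\mathcal{P}_\Gamma$ have equal dihedral angles at corresponding edges. By Proposition~\ref{prop:diaglengths}, corresponding faces of $P_1$ and $P_2$ are congruent; in particular corresponding edges have equal length and corresponding face angles are equal. Fix a flag (vertex, incident edge, incident face) of $P_1$ and the corresponding flag of $P_2$, and let $g\in\mathrm{Iso}(\h^3)$ be the unique isometry matching them. The initial faces are congruent with matching flags, so $g$ carries the initial face of $P_1$ onto that of $P_2$. Each adjacent face shares an edge $e'$ with an already-matched face; the dihedral angle at $e'$ agrees in $P_1$ and $P_2$, and the two copies of the adjacent face are congruent, so $g$ carries this face of $P_1$ onto the corresponding face of $P_2$ as well. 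Since the transported pieces must agree with the already given polyhedron $P_2$, no monodromy obstruction arises going around vertices or faces. As $\Gamma$ is connected, $g$ matches every vertex of $P_1$ with the corresponding vertex of $P_2$, hence $g(P_1)=P_2$. Therefore $P_1$ and $P_2$ are isometric, proving the Stoker conjecture.
\end{proof}
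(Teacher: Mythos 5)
Your proof is correct and starts the same way as the paper's (invoking Proposition~\ref{prop:diaglengths} to get congruent corresponding faces), but the final step is genuinely different. The paper simply cites \cite[Theorem 4.1]{rivhodg}: two polyhedra with the same $1$-skeleton and congruent corresponding faces are isometric --- a hyperbolic Cauchy-type rigidity theorem. You instead prove the needed rigidity by hand, developing $P_1$ onto $P_2$ face by face. Your argument is more elementary and self-contained, but it uses strictly more input: the Cauchy-type theorem needs only the face congruences, whereas your propagation step also uses the equality of dihedral angles at each shared edge to pin down the supporting plane of the next face (this extra hypothesis is available here, so nothing is lost). Two points you gloss over but which do go through: (i) ``congruent faces'' must mean congruent via the combinatorial correspondence of vertices --- this is what Proposition~\ref{prop:diaglengths} actually delivers, since equality of all edge and diagonal lengths fixes the vertex configuration of a convex polygon up to a label-preserving planar isometry; (ii) in the propagation step, once the supporting plane of the adjacent face is determined by the shared edge, the dihedral angle, and convexity (which fixes the side), the face itself is determined inside that plane because it is congruent to its counterpart, shares an edge with matched endpoints, and lies on the determined side --- an isometry of a hyperbolic plane fixing two points and preserving the sides of the geodesic through them is the identity. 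With this spelled out, the ``monodromy'' worry indeed evaporates for the reason you indicate: $g$ is one global isometry and each vertex need only be checked along a single path in the (connected) dual graph.
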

\begin{proof}
Let $P_1$ and $P_2$ have the same $1$-skeleton and dihedral angles. Assuming that Conjecture \ref{cnj:wstok} is true, then by Proposition \ref{prop:diaglengths}, $P_1$ and $P_2$ have congruent faces. Then, by \cite[Theorem 4.1]{rivhodg}, two polyhedra with the same $1$-skeleton and congruent faces are isometric.
\end{proof}

\bibliographystyle{alpha}
\bibliography{Bibliography}

\address
\end{document}